 \theoremstyle{definition}
 \newtheorem{defn}{Definition}
 \newtheorem{theorem}{Theorem}
 \newtheorem{lemma}{Lemma} 
 \newtheorem{corollary}{Corollary}
 \newtheorem{proposition}{Proposition}
 \newtheorem*{conjecture}{Conjecture}
 \newtheorem*{remark}{Remark}
\newcommand\ip[2]{\langle #1, #2 \rangle} 
\newcommand{\Z}{\mathbb{Z}} 
\newcommand{\C}{\mathbb{C}}
\newcommand{\Fr}{\mathcal{G}}
\newcommand{\eps}{\varepsilon}
\newcommand{\supp}{\text{supp}}
\newcommand{\br}[1]{\langle #1 \rangle}
\newcommand{\sgn}{\text{sgn}}
\newcommand{\rank}{\text{rank}}
\newcommand{\tensor}{\otimes}
\DeclareMathOperator{\Aut}{\mathrm{Aut}}
\newcommand{\be}{\begin{equation}}
\newcommand{\ee}{\end{equation}}
\begin{document}
\title{Towards a classification of incomplete Gabor POVMs in $\C^d$}

\date{\today}

\author{Assaf Goldberger}
\address{School of Mathematical Sciences, Tel-Aviv University, Tel-Aviv, Isreal, 69978} \email{assafg@tauex.tau.ac.il}
\author{Shujie Kang}
\address{Department of Mathematics,
University of Texas at Arlington, Arlington TX 76019, USA}
\email{shujie.kang@uta.edu}
\author{Kasso A.~Okoudjou}
\address{Department of Mathematics,
Tufts University, Medford MA 02131, USA} \email{Kasso.Okoudjou@tufts.edu}

\begin{abstract}
Every (full) finite Gabor system generated by a unit-norm  vector $g\in \C^d$ is a finite unit-norm tight frame (FUNTF), and can thus be associated with a (Gabor) positive operator valued measure (POVM). Such a POVM is informationally complete if the $d^2$ corresponding rank one matrices form a basis for the space of $d\times d$ matrices. A sufficient condition for this to happen is that the POVM is symmetric, which is equivalent to the fact that the associated Gabor frame is an equiangular tight frame (ETF). The existence of Gabor ETF is an important special case  of the Zauner conjecture. It is known that generically all Gabor FUNTFs lead to informationally complete POVMs.   In this paper, we initiate a classification of non-complete Gabor POVMs. In the process we establish some seemingly simple facts about the eigenvalues of the Gram matrix of the rank one matrices generated by a finite Gabor frame. We also use these results to construct some sets of $d^2$ unit vectors in $\C^d$ with a relatively smaller number of distinct inner products.  
\end{abstract}

\subjclass[2000]{Primary 42C15 Secondary 65T50, 81R05, }
\keywords{Finite Gabor systems, Equiangular Tight Frames,  k-distant sets}

\maketitle

\tableofcontents

\section{Introduction and background}\label{Sec:Intro}
Let $M$ and $T$ denote the modulation and translation operators defined  by 
$$\begin{cases}
Mg=(\omega^ng_n)_{n=0}^{d-1}\\
Tg=(g_{n-1})_{n=0}^{d-1}
\end{cases}$$ for $g=(g_n)_{n=0}^{d-1}\in \C^d$, and where $\omega = e^{2\pi i/d}$ is a $d^{th}$ root of unity.

The (finite) Gabor system $ \mathcal{G}(g):=\mathcal{G}(g, \Z_d\times \Z_d)=\{M^kT^\ell g\}_{k, \ell =0}^{d-1}$ generated by a unit vector $g\in \C^d$ is a Finite Unit Norm Tight Frame (FUNTF), that is 
$$\sum_{k, \ell =0}^{d-1}|\ip{x}{M^kT^\ell g}|^2= d^3\|x\|^2 \iff x=\tfrac{1}{d^{3}} \sum_{k, \ell=0}^{d-1}\ip{x}{M^k T^\ell g}M^k T^\ell g$$ for each $x\in \C^d$, see \cite{benedetto2003finite, waldron2018introduction}. 
It follows that $\mathcal{G}(g)$ can be cononically associated with a \emph{positive operator valued measure (POVM)}.  By definition, a (Gabor) POVM is \emph{informationally complete (IC)} if the ($d^2$) rank-one matrices $\{M^k T^\ell gg^*T^{-\ell}M^{-k}\}_{k, \ell=0}^{d-1}$  form a basis for $\C^{d^{2}}$ viewed as the space of $d\times d$ matrices. 
If in addition,  $$|\ip{M^k T^\ell gg^*T^{-\ell}M^{-k}}{M^{k'} T^{\ell'} gg^*T^{-\ell'}M^{-k'}}_{HS}|=|\ip{M^kT^\ell  g}{M^{k'}T^{\ell'}g}|^2$$ is constant for $(k, \ell)\neq (k', \ell')$, then we say that the Gabor POVM is \emph{symmetric (S)}. Here and in the sequel, we use $\ip{A}{B}_{HS}=\text{trace}(AB^*)$ to denote the Hilbert-Schmidt inner product on $\C^{d^{2}}$. It is a simple fact to check that every symmetric Gabor POVM is also informationally complete, but the converse is not necessarily true. 
In the sequel,  a (Gabor) FUNTF $\mathcal{G}(g)$ will be called a  SIC-POVM if the corresponding (Gabor) POVM is  symmetric and informationally complete. We refer to \cite{appleby2005symmetric, Renes2004} for more on POVMs.

It was conjectured by Zauner~\cite{zauner1999quantum} that for each $d\geq 1$ there exists a unit vector $g \in \C^d$ such that the Gabor system $\mathcal{G}(g)$ is a SIC-POVM. This conjecture is usually stated in the following form: 

\begin{conjecture}\cite{zauner1999quantum}
In any $\mathbb{C}^d$, $d>2$, there exists Gabor SIC-POVM generated by a single unit norm vector $g$ under the orbit of Heisenberg group. In particular,  for any $k,\ell\in \mathbb{Z}/d\mathbb{Z}\setminus\{(0,0)\}$, $|\langle g,M^kT^\ell g \rangle|=\frac{1}{\sqrt{d+1}}$, 
\end{conjecture}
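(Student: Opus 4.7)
The plan would be to construct, for each $d \geq 3$, a unit vector $g \in \C^d$ with $|\ip{g}{M^k T^\ell g}|^2 = \tfrac{1}{d+1}$ for every $(k,\ell) \in \Z_d \times \Z_d$ with $(k,\ell) \neq (0,0)$. Since $\mathcal{G}(g)$ is automatically a FUNTF, this equiangularity condition is exactly what promotes it to an ETF and hence to a SIC-POVM, so the geometric content of the conjecture reduces to solvability of this algebraic system. The $d^2 - 1$ modulus equations are partially redundant under Heisenberg covariance, leaving roughly $(d^2-1)/d$ orbit representatives, but the total count still grows quadratically in $d$ and a bare parameter search is hopeless.

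The standard strategy, due to Zauner himself, is to impose additional symmetry. The Clifford group (the normalizer of the Heisenberg group inside $U(d)$) acts on the set of fiducial vectors, and empirically every known SIC fiducial is an eigenvector of some order-three element $Z_0$ of this group. I would therefore parametrize the $Z_0$-eigenspace explicitly via the Weil representation of $SL_2(\Z/d\Z)$, cutting the ambient dimension down to roughly $d/3$ complex parameters. A further anti-unitary symmetry (parity composed with an appropriate complex conjugation) typically halves the variable count again. Substituting this parametrization into the Heisenberg-orbit-reduced equations produces a coupled polynomial system whose solvability would be attacked either by a Gr\"obner basis computation for small $d$, or by a homotopy-continuation argument deforming from a dimension where the solution is already known.

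The main obstacle, and the reason the conjecture has resisted proof for over twenty-five years, is that no uniform closed-form description of these solutions is known as $d$ varies. Coordinates of fiducial vectors lie in algebraic number fields of rapidly growing degree, and recent work of Appleby, Flammia, Kopp, McConnell, Scott and Yard ties their existence to class field theory over the real quadratic field $\mathbb Q(\sqrt{(d-3)(d+1)})$. Any general proof is therefore likely to require constructing the fiducial coordinates explicitly inside specific ray class fields and verifying the equiangularity equations through Galois-theoretic identities — a program presently out of reach. A realistic near-term target is not the full conjecture but rather a new infinite family of dimensions, for instance a single congruence class modulo a small integer, treated by pinning down the relevant ray class field together with its Stark units and showing that the resulting candidate vector satisfies the required $d^2-1$ identities.
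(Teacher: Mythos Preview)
The paper does not prove this statement: it is presented as a \emph{conjecture}, attributed to Zauner, and immediately afterwards the authors write that ``the conjecture remains open'' and list the dimensions in which it has been resolved. So there is no proof in the paper to compare against.

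Your proposal is not a proof either, and to your credit you say so explicitly: you outline the standard Zauner-symmetry reduction via the Clifford group, note the connection to ray class fields over $\mathbb Q(\sqrt{(d-3)(d+1)})$, and conclude that a uniform construction is ``presently out of reach.'' That is an accurate summary of the state of the art, but it is a research plan and a literature survey, not an argument that establishes the statement. In particular, the crucial step --- showing that the symmetry-reduced polynomial system actually has a solution for every $d$ --- is exactly the part nobody knows how to do, and your proposal does not supply it.

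So the honest assessment is: the statement is an open conjecture, the paper treats it as such, and your write-up correctly identifies it as open rather than attempting a spurious proof.
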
	
The conjecture remains open, and the search for  the generating vector (or fiducial vector) has been resolved  in dimensions $2-16, 19, 24, 28, 35, 48, 124,$ and $323$. Moreover, numerical solutions in dimensions up to $67$ have also been established.  We refer to \cite{appleby2012galois, grassl2017fibonacci, zauner1999quantum} and the references therein for more on the Zauner conjecture. 
	
We note that the (full) Gabor frame $\mathcal{G}(g)$ always generates a POVM, so the Zauner conjecture asserts that one can always find a generator so that this POVM is also symmetric, and hence informationally complete. The question of completeness of the Gabor POVM had long been investigated and it is known (\cite{Healy93, ArPeSa04}) that 	$\mathcal{G}(g)$ is an informationally complete if and only if \begin{equation}\label{completnesCond}
\ip{g}{M^kT^\ell g}\neq 0  \, \, \forall \, \, (k, \ell) \in \Z_d\times \Z_d.
\end{equation} 
 It is worth observing that the completeness of Gabor POVMs is central in the theory of phase retrieval \cite{BaCaEd06, BCMN14, BoFl16}. In particular, in \cite{BoFl16} it was proved that the injectivity of the phase retrieval map generated by a Gabor system $\mathcal{G}(g)$ is equivalent to~\eqref{completnesCond}.  Furthermore, this condition is generic in the sense that~\eqref{completnesCond} holds for ``almost all'' unit-norm vectors $g$. 
 
 In this paper, we offer a new proof of the completeness of Gabor POVMs. Our proof is based on the spectral analysis of the $d^2\times d^2$ Gram matrix associated to the rank-one matrices  $\{M^k T^\ell gg^*T^{-\ell}M^{-k}\}_{k, \ell=0}^{d-1}$. In particular, we prove that the $d^2$ eigenvalues of this Gram matrix are $\lambda_{k, \ell}=d|\ip{g}{M^k T^\ell g}|^2$ where $k, \ell=0, 1, \hdots, d-1.$ To the best of our knowledge this simple fact has never been stated in this manner before, though one of the results in \cite[Theorem 2.3]{BoFl16} asserts that the spectrum of this Gram matrix lies in the interval $[\min_{k, \ell} d|\ip{g}{M^k T^\ell g}|^2, \max_{k, \ell}d|\ip{g}{M^k T^\ell g}|^2].$ 
  After re-deriving~\eqref{completnesCond}, we (re) prove that this condition is generic in the sense that the set of all generators of such systems is open dense in the unit sphere of $\C^d$.  
 We then focus on classifying non-IC Gabor POVMs.  This is an interesting problem in its own right and is the primary motivation for this paper. We use some of our classification results  to construct $k-$ distance sets of $d^2$ elements in $\C^d$, where $k$ is proportional to $d$.

The rest of the paper is organized as follows. In Section~\ref{Sec:SpectrumG} we give two different characterization of Gabor IC-POVMs. Both characterizations are based on the spectral analysis of the Gramian of the rank-one matrices associated to the Gabor system. The first approach which was mentioned above is based on explicit formulaes for the eigenvalues of the Gram matrix. The second approach is achieved by formulating the problem using an algebraic framework that we hope to use to investigate other POVMs.  The algebraic approach also allows us to investigate the rank of the span of the rank-one matrices associated to the non IC Gabor POVMs. In Section~\ref{sect:rankinvariance} we discuss some invariance of the rank of these POVMs. We refer to \cite{IvJaMi20} for a related work dealing with POVMs generated by rank $2$ positive semidefinite matrices. 

\section{Informationally Complete POVMs}\label{Sec:SpectrumG}

The goal of this section is to re-derive the characterization of IC  Gabor POVMs. In the sequel, for every vector $g$ we define the \emph{support} of $g$ to be the set $\supp(g)=\{i| \ g_i\neq 0\}$. In addition, the cardinality of $\supp(g)$ will be denoted by $\|g\|_0.$
Given a unit vector $g\in \C^d$, the collection $$\Fr(g) = \{ g_{k, \ell}:=M^k T^{\ell} g \}_{(k,\ell)\in \mathbb{Z}_d\times \mathbb{Z}_d}$$ will denote the  (Gabor) FUNTF generated by $g$. We will view $\Fr(g)$ as an ordered multiset, with respect to the lexicographical order of $\Z_d\times\Z_d$, and we shall abuse notation and denote again by $\Fr(g)$  the $d \times d^2$ matrix whose columns are the vectors of this frame given in this order. 
To each vector $g_{k,\ell}$ we associate a rank-one matrix given by $$\Pi_{k,\ell} :=\Pi_{k, \ell}(g)=  g_{k,\ell} \otimes g_{k,\ell}= g_{k,\ell} g_{k,\ell}^{*}.$$

We seek a characterisation of all $g$ for which 
the rank-one matrices $\{\Pi_{k,\ell}\}_{k,\ell=0}^{d-1}$ span the space of all $d\times d$ matrices. This is the case if and only if the Gram matrix $$G(g):=(\ip{\Pi_{k,\ell}}{\Pi_{k',\ell'}})=(\text{tr}({\Pi_{k,\ell}^*\Pi_{k',\ell'}}))$$ of this set of matrices if full rank. This is also equivalent to the information completeness of the associated Gabor POVM.  Thus we focus on analyzing the rank of $G(g)$ by giving a full descriptions of its spectrum. The completness of Gabor POVM was also obtained in \cite[Theorem 15; Section IV-A]{Healy93} and in \cite[Section 4.1]{ArPeSa04}. Nonetheless, we point out that our result also gives the rank of the Gram matrix $G(g)$.

The main result of this section is the following theorem that gives the rank of the Gram matrix $G(g)$.

\begin{theorem}\label{thm:rankG}
Let $g\in \C^d$ be a unit vector. The rank of the Gramian $G(g)$ equals the number of nonzero entries in the multiset $\{\ip{g}{M^kT^{\ell}g}\}_{(k,\ell)\in \Z_d\times \Z_d}$. Consequently, the Gabor POVM is informationally complete if and only if $\ip{g}{M^kT^{\ell}g}\neq 0$ for all $k, \ell.$
\end{theorem}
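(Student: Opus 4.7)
The plan is to diagonalize the Gram matrix $G(g)$ explicitly by exploiting the fact that it is a \emph{group-circulant} over the abelian group $\Z_d\times\Z_d$, and then to read the rank off the resulting eigenvalue list.

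First, using the Heisenberg commutation relation $MT=\omega TM$ (immediate from the definitions of $M$ and $T$), one derives $M^aT^b=\omega^{ab}T^bM^a$ and hence $\langle g_{k,\ell},g_{k',\ell'}\rangle=\omega^{\ell(k'-k)}\langle g,M^{k'-k}T^{\ell'-\ell}g\rangle$. Taking squared moduli kills the phase and gives
\[
G(g)_{(k,\ell),(k',\ell')}=|\langle g,M^{k'-k}T^{\ell'-\ell}g\rangle|^2,
\]
so $G(g)$ is a group-circulant matrix whose generating function on $\Z_d\times\Z_d$ is $f(m,n):=|\langle g,M^mT^ng\rangle|^2$. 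It is a standard fact that every such matrix is simultaneously diagonalized by the characters of $\Z_d\times\Z_d$: the eigenvectors are $v_{m,n}(k,\ell)=d^{-1}\omega^{km+\ell n}$ and the corresponding eigenvalues are the discrete Fourier coefficients
\[
\lambda_{m,n}=\sum_{k,\ell=0}^{d-1}|\langle g,M^kT^\ell g\rangle|^2\,\omega^{-(km+\ell n)}.
\]

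Second, I would expand $|\langle g,M^kT^\ell g\rangle|^2$ in coordinates as a double sum in $i,j$, collapse the sum over $k$ against $\sum_k\omega^{-ks}=d\,\delta_{s,0}$ to enforce $j=i-m$, then collapse the sum over $\ell$ after the substitution $u=i-\ell$. A short bookkeeping computation shows that what remains factors as a squared modulus, yielding the clean identity
\[
\lambda_{m,n}=d\,|\langle g,M^nT^{-m}g\rangle|^2.
\]
Since $(m,n)\mapsto(-n,m)$ is a bijection of $\Z_d\times\Z_d$, the multiset of eigenvalues of $G(g)$ is exactly $\{d\,|\langle g,M^kT^\ell g\rangle|^2:(k,\ell)\in\Z_d\times\Z_d\}$, which is the explicit spectrum announced in the introduction.

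The two assertions of the theorem are then immediate. The rank of $G(g)$ equals the number of nonzero eigenvalues, which by the identification above is the number of pairs $(k,\ell)$ with $\langle g,M^kT^\ell g\rangle\neq 0$. Moreover, the POVM is informationally complete precisely when the $d^2$ rank-one matrices $\Pi_{k,\ell}$ are linearly independent, i.e.\ when the $d^2\times d^2$ matrix $G(g)$ has full rank, which by the first part is equivalent to $\langle g,M^kT^\ell g\rangle\neq 0$ for all $(k,\ell)$. The only real obstacle is the bookkeeping in the Fourier computation of step two: managing the signs, the substitution $u=i-\ell$, and seeing the remaining double sum factor as $|\sum_i g_i\overline{g_{i+m}}\omega^{-in}|^2$; conceptually nothing beyond Heisenberg commutation and Fourier analysis on $\Z_d\times\Z_d$ is used.
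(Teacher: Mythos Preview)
Your proposal is correct and follows essentially the same route as the paper: recognizing that $G(g)$ is a group circulant over $\Z_d\times\Z_d$ (the paper phrases this as ``block circulant with circulant blocks''), diagonalizing it by the characters (equivalently by $F\otimes F$), and then computing the eigenvalues explicitly to obtain $\lambda_{a,b}=d\,|\ip{g}{M^bT^ag}|^2$, from which the rank statement is immediate. The only quibble is a harmless sign convention in your eigenvalue formula (with $v_{m,n}(k,\ell)=\omega^{km+\ell n}$ the exponent in the Fourier sum should be $+(km+\ell n)$, not $-(km+\ell n)$), but as you note, any such index shift is absorbed by a bijection of $\Z_d\times\Z_d$ and does not affect the multiset of eigenvalues.
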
	

In Section~\ref{subsec:Gram} we compute all the eigenvalues of $G(g)$ obtaining its rank as a consequence. We note that \cite[Theorem 2.3]{BoFl16} shows that~\eqref{completnesCond} implies that $G(g)$ has full rank. Their proof consists of showing that~\eqref{completnesCond} implies that the lowest eigenvalue of $G(g)$ is positive. Our result shows that~\eqref{completnesCond} is also a necessary for $G(g)$ to be full rank. In Section~\ref{subsec:algebraic}, we use an  algebraic framework to give a second proof of Theorem~\ref{thm:rankG}. We then use this framework in Section~\ref{sect:rankinvariance} to identify some invariants of the rank of $G(g)$.

\subsection{Spectrum of the Gramian $G(g)$}\label{subsec:Gram}

In this section, we prove that the Gram matrix $G(g)$ is block circulant with circulant blocks. This structure allows us to compute the eigenvalues of $G(g)$ since it is diagonalizable by an appropriate DFT matrix. In particular, the  first part of result is an extension of a well-known fact about block circulant matrices \cite{tee2007eigenvectors, combescure2009block}. To the best of our knowledge, the remaining parts are new, though they seem quite simple. Let $F:=F_d$ denote the $d\times d$ DFT matrix. In particular, the $(k, \ell)$ entry of $F$ is $F_{k, \ell}=\omega^{k \ell }/\sqrt{d}$, where $\omega=e^{2\pi i /d}$.

\begin{theorem}\label{thm:Geigenvalue} Let $g$ be a unit vector in $\C^d$, and $G(g)=(\ip{\Pi_{k,\ell}}{\Pi_{k',\ell'}})$ be the Gram matrix of the rank-one projectors $\{\Pi_{k,\ell}=M^kT^\ell g\otimes M^kT^\ell g\}_{k, \ell=0}^{d-1}$. The following statements hold.
\begin{enumerate}
    \item[(i)] $G(g)$ is a block circulant matrix with circulant blocks. In particular, the $((k,\ell),(k',\ell'))$-th entry of $G$ is  
	\begin{equation*}\label{eqn:entryG}
		G_{((k,\ell),(k',\ell'))} \equiv \ip{\Pi_{k,\ell}}{\Pi_{k',\ell'}} =|\ip{g}{M^{k'-k}T^{\ell'-\ell}g}|^2,
	\end{equation*}
	where $k,k', \ell, \ell' \in \{0,\cdots,d-1\}$.
    \item[(ii)] $G(g)$ can be diagonalized by $F\otimes F$. In particular,  the eigenvalues $\{\lambda_{a, b}\}_{a, b=0}^{d-1}$ of $G(g)$ are given by
	\begin{equation*}
		\lambda_{a,b}=\sum\limits_{k,l=0}^{d-1} \omega^{ak+bl}|\ip{g}{M^kT^lg}|^2=d\left|\sum_{k=0}^{d-1} g_kg_{k+a}^*\omega^{bk} \right|^2=d|\ip{g}{M^bT^ag}|^2
	\end{equation*} where $a, b \in \Z_d.$
	
 		In particular, up to a factor of $d$, the entries of $G(g)$ are the same as its eigenvalues.
    \end{enumerate}
	
\end{theorem}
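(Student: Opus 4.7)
The plan is to prove (i) by direct computation of the Hilbert--Schmidt inner products, leveraging the Heisenberg/Weyl commutation relations, and then to deduce (ii) from the classical spectral theory of block circulant matrices with circulant blocks (BCCB) together with an explicit collapse of the resulting two-dimensional DFT into a single squared modulus. For part (i), the starting point is the elementary identity $\ip{vv^*}{ww^*}_{HS} = |\ip{v}{w}|^2$, which gives $G_{((k,\ell),(k',\ell'))} = |\ip{M^k T^\ell g}{M^{k'} T^{\ell'}g}|^2$. From the definitions $(Mg)_n = \omega^n g_n$ and $(Tg)_n = g_{n-1}$ one derives the Weyl relation $M^a T^b = \omega^{ab}T^b M^a$, and repeated use of this commutation together with unitarity gives
\[
\ip{M^k T^\ell g}{M^{k'} T^{\ell'}g} \;=\; \omega^{\alpha}\,\ip{g}{M^{k'-k}T^{\ell'-\ell}g}
\]
for some integer phase $\alpha$ depending on $k,\ell,k',\ell'$. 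Taking absolute values squared removes the phase, and the resulting dependence only on $(k'-k,\ell'-\ell)\bmod d$ is precisely the BCCB structure under the lexicographic ordering of the index set $(k,\ell)$.

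For part (ii), I would invoke the classical fact \cite{tee2007eigenvectors, combescure2009block} that every $d^2\times d^2$ BCCB matrix is simultaneously diagonalized by $F\otimes F$, with eigenvalues given by the two-dimensional DFT of its defining first-row sequence. Applied with defining sequence $c(k,\ell) = |\ip{g}{M^k T^\ell g}|^2$, this yields the first equality
\[
\lambda_{a,b} \;=\; \sum_{k,\ell=0}^{d-1}\omega^{ak+b\ell}\,|\ip{g}{M^k T^\ell g}|^2.
\]
To evaluate this double sum, I would expand $|\ip{g}{M^k T^\ell g}|^2$ as a product of its defining sum and its complex conjugate, interchange the orders of summation, and use the character orthogonality $\sum_{k=0}^{d-1}\omega^{k(a+n-m)} = d\,\delta_{m,\,n-a}$ to collapse one index. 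A subsequent substitution $p = n - \ell$ decouples the remaining double sum into a product $A\cdot\overline{A}$ of two conjugate single sums, producing $d|A|^2$ with $A$ of the form $\sum_n g_n\overline{g_{n+a}}\omega^{bn}$ (up to an overall phase); a final change of variable identifies $|A|^2$ with $|\ip{g}{M^b T^a g}|^2$, yielding the remaining equalities.

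The main obstacle is the careful bookkeeping of indices and phases across these manipulations: the Heisenberg phase in part (i), the precise form of the Kronecker $\delta$ produced by character orthogonality, and the final change of variable identifying $A$ with $\ip{g}{M^b T^a g}$ must all be tracked consistently with the sign conventions for $M$, $T$, and the DFT, so that the eigenvalue indexing matches the stated formula (the multi-set symmetry $(k,\ell)\mapsto(-k,-\ell)$ of $\{|\ip{g}{M^kT^\ell g}|^2\}$ absorbs the only genuine ambiguity). Once the conventions are fixed, each individual step reduces to a routine symbolic manipulation.
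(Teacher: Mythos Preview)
Your proposal is correct and follows essentially the same approach as the paper: part (i) via the Weyl commutation relation to reduce the Gram entries to $|\ip{g}{M^{k'-k}T^{\ell'-\ell}g}|^2$, and part (ii) via the BCCB diagonalization by $F\otimes F$ followed by expanding $|\ip{g}{M^kT^\ell g}|^2$, collapsing one index with character orthogonality, and factoring the remaining sum. The only cosmetic difference is that the paper re-derives the $F\otimes F$ diagonalization explicitly (building the eigenvectors block-by-block) rather than citing it, but the eigenvalue computation is line-for-line the same.
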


\begin{proof} Let $g$ be a unit vector in $\C^d$.
\begin{enumerate}
    \item[(i)] $G_{((k,\ell),(k',\ell'))}(g) = 	|\ip{g_{k,\ell}}{g_{k',\ell'}}|^2 = |\ip{M^kT^\ell g}{M^{k'}T^{\ell'}g}|^2 = |\ip{g}{M^{k'-k}T^{\ell'-\ell}g}|^2.$

 To prove that $G(g)$ is block circulant matrix with circulant blocks, we decompose $G(g)$ into $d^2$ blocks, each of the size $d\times d$ as following:
	\[
	G(g) = 
	\begin{bmatrix}
		A_{0,0} & A_{0,1} & A_{0,2} & \cdots & A_{0,d-1}\\
		A_{1,0} & A_{1,1} & A_{1,2} & \cdots & A_{1,d-1}\\
		\vdots  & \vdots  & \vdots  & \vdots & \vdots\\ 
		A_{d-1,0} & A_{d-1,1} & A_{d-1,2} & \cdots & A_{d-1,d-1}
	\end{bmatrix}
	\]
	The $(\ell,\ell')$-th entry in block $A_{k,k'}$ is then $G_{((k,\ell),(k',\ell'))}(g)$.

	First we show that $G$ is block circulant, that is, $A_{k,k'}=A_{k+1,k'+1}$ for any $k,k'\in \Z_d$. For any $\ell,\ell'\in \Z_d$ the $(\ell,\ell')$-entry in $A_{k+1,k'+1}$ is 
	$$G_{((k+1,\ell),(k'+1,\ell'))}=|\ip{ M^{k+1}T^\ell g}{ M^{k'+1}T^{\ell'}g }|^2=|\ip{g}{M^{k'-k}T^{\ell'-\ell}g}|^2.$$
	But this is exactly  the $(
	\ell,\ell')$-entry in $A_{k,k'}$. This shows that $G(g)$ is a block circulant matrix. Furthermore we have $A_{k,k'}=A_{0,k'-k}$. For simplicity, we denote $A_k \equiv A_{0,k}$, and note that
	\[
	G(g) = 
	\begin{bmatrix}
		A_0     & A_1 & A_2 & \dots & A_{d-1} \\
		A_{d-1} & A_0 & A_1 & \dots & A_{d-2} \\
		\vdots   & \vdots & \vdots & \vdots & \vdots\\
		A_1     & A_2 & A_3 & \dots & A_0
	\end{bmatrix}.
 	\]
	
	We next prove that each block $A_k$ is also a circulant matrix. Without loss of generality, we focus on the first row of the blocks. For any $k,\ell,\ell'\in \Z_d$, the $(\ell+1,\ell'+1)$-th entry of $A_k$ is equal to the $(\ell,\ell')$-th entry of $A_k$, since
	\begin{equation*}
	|\ip{T^\ell g}{ M^{k}T^{\ell'}g }|^2 = |\ip{ g}{ M^{k}T^{\ell'-\ell}g}|^2.
	\end{equation*}
	This concludes the proof of the first part of the Theorem.

    \item[(ii)] For any $a\in \Z_d$, consider the functions $h_a:\C^d\rightarrow \C^{d^2}$ defined as following
$$h_a(v) = \begin{bmatrix} v\\  \rho_a v\\  \rho_a^2 v\\ \vdots\\ \rho_a^{d-1} v\end{bmatrix},$$
where $\rho_a = \omega^a$. We claim that for any eigenvector $w$ of $G$, $w\in Range(h_a)$ for some $a\in \Z_d$. 

Denote $H_a= A_0 + A_1\rho_a + A_2\rho_a^2+ \cdots + A_{d-1}\rho_a^{d-1}$. By part (i), each $H_i$ is a circulant matrix, thus can be diagonalized by the $d\times d$ DFT matrix $F$. Suppose $v$ is an eigenvector of $H_a$ and $H_av=\lambda v$. Then
\[
Gh_a(v)=
\begin{bmatrix}
	A_0     & A_1 & A_2 & \dots & A_{d-1} \\
		A_{d-1} & A_0 & A_1 & \dots & A_{d-2} \\
		\vdots   & \vdots & \vdots & \vdots & \vdots\\
		A_1     & A_2 & A_3 & \dots & A_0
\end{bmatrix}
\begin{bmatrix}
v \\ \rho_a v \\ \vdots \\ \rho_a^{d-1} v 
\end{bmatrix}
=\begin{bmatrix}
H_av \\ \rho_aH_iv \\ \cdots \\ \rho_a^{d-1}H_iv
\end{bmatrix} = \lambda h_a(v).
\]

So the columns of $F \otimes F$ are eigenvectors of $G(g)$. Since $F  \otimes F$ is invertible, its columns account for all $d^2$ eigenvectors of $G(g)$.

We now find the spectrum of $G(g)$ which consists of  the collections of eigenvalues of $\{H_a\}_{a=0}^{d-1}$. Denoting the $(0,n)$ in $H_a$ as $H_a^n$, $n\in \{0,1,\cdots ,d-1\}$. Then the $d$ eigenvalues of $H_a$ are given by	
	\begin{align*}
		\lambda_{a,b} &= H_a^0+\rho_b H_a^1 + \rho_b^2 H_a^2 + \cdots + \rho_b^{d-1} H_a^{d-1}\\
		&= \sum\limits_{\ell=0}^{d-1} \rho_b^\ell H_a^\ell
		= \sum\limits_{\ell=0}^{d-1} \rho_b^\ell (\sum\limits_{k=0}^{d-1} \rho_a^k |\ip{g}{M^kT^\ell g}|^2)
		= \sum\limits_{\ell=0}^{d-1} \sum\limits_{k=0}^{d-1} \rho_b^k \rho_a^\ell |\ip{g}{M^kT^\ell g}|^2\\
		&= \sum\limits_{k,\ell=0}^{d-1}  \omega^{ak+b\ell} |\ip{g}{M^kT^\ell g}|^2\\
	\end{align*}
	Writing, $|\ip{g}{M^kT^\ell g}|^2=\sum_{n, m=0}^{d-1} g_ng_m^*g_{m-\ell}g_{n-\ell}^*\omega^{k(n-m)}$, we see that 
 		$$\lambda_{a,b}=\sum_{n, m,k,\ell=0}^{d-1} g_ng_m^*g_{m-\ell}g_{n-\ell}^*\omega^{k(n-m+a)+b\ell}.$$ Changing variables $r=n-\ell$, we obtain
 		$$\lambda_{a,b}=\sum_{n,m,k,r=0}^{d-1} g_ng_m^*g_{r+m-n}g_{r}^*\omega^{k(a+n-m)}\omega^{b(n-r)}.$$
 		Summing first over $k$, we may replace $\sum_{k=0}^{d-1} \omega^{k(a+n-m)}$ by $d\delta_{0,a+n-m}$. Then substituting $m=n+a$ we obtain that
 		$$ \lambda_{a,b}=d\sum_{n,r=0}^{d-1} g_ng_{n+a}^*g_{r+a}g_r^*\omega^{b(n-r)}=d\left|\sum_{n=0}^{d-1} g_ng_{n+a}^*\omega^{bn} \right|^2,$$
 		which is (ii).
	
	\end{enumerate}
\end{proof}

We can now give a proof of Theorem~\ref{thm:rankG}.
\begin{proof}[Proof of Theorem~\ref{thm:rankG}]
 Theorem~\ref{thm:rankG} follows directly from Theorem \ref{thm:Geigenvalue}. 
 \end{proof}

 	\begin{remark}
	\begin{enumerate}
\item  If we encode $g$ by a polynomial $f_g(X)=\sum_{k=0}^{d-1}g_k X^k\in \C[X]/(X^d-1)$, then Theorem~\ref{thm:rankG} can be rephrased as:
	
		The rank of $G(g)$ is the total number of zero coefficients in the collection of polynomials
		$$\left\{f_g(\omega^{-\ell}X)f_g(X^{d-1})\mod (X^d-1) \ | \ \ell=0,1,\ldots,d-1  \right\}.$$

	\item When $d=2$, the condition for a vector $g=(ce^{i\theta_1},\sqrt{1-c^2}e^{i\theta_2})$  to generate an informationally complete POVM is $c^2 \notin \{0,1,\frac{1}{2}\}$.
	\item $\lambda_{a,b}=\lambda_{d-a,d-b}$.
	\item Denote the matrix $\Lambda$ with entries $\Lambda_{a,b}=\lambda_{a,b}$, the matrix $\Xi$ with entries $\Xi_{k,l}=|\ip{g}{M^kT^lg}|^2$. Then $\Lambda = F \Xi F^*$.
	
	\end{enumerate}
\end{remark}

The next result shows that Gabor informationally complete POVMs are generic in the sense that the set of all such POVMs is an open dense in the set of all FUNTFs of $d^2$ elements in $\C^d$. 

\begin{proposition}\label{prop:generic} 
For each $d\geq 2$ there exists a complete Gabor POVM in $\C^d$. Moreover, the set of all normalized vectors $g$ such that $\rank(G(g))=d^2$ is open dense in the unit sphere in $\C^d$.
\end{proposition}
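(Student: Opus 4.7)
The plan is to invoke Theorem~\ref{thm:rankG} to reduce the statement to a question about non-vanishing of a finite family of analytic functions on the unit sphere, and then argue by bihomogeneity that each such function cuts out a nowhere-dense set.

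For each $(k,\ell)\in\Z_d\times\Z_d$ set
\[
\phi_{k,\ell}(g) := \ip{g}{M^kT^\ell g}=\sum_{n=0}^{d-1} g_n\,\overline{g_{n-\ell}}\,\omega^{-kn}.
\]
By Theorem~\ref{thm:rankG}, $\rank G(g)=d^2$ if and only if $\phi_{k,\ell}(g)\neq 0$ for every $(k,\ell)$. The pair $(0,0)$ contributes $\|g\|^2=1$, so it suffices to examine $(k,\ell)\neq(0,0)$. Each such $\phi_{k,\ell}$ is bihomogeneous of bidegree $(1,1)$ in the coordinates $(g,\bar g)$, hence its vanishing on the unit sphere is equivalent to its vanishing on $\C^d\setminus\{0\}$, i.e.\ to being the zero polynomial. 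The first step is therefore to check that $\phi_{k,\ell}$ is a nonzero polynomial: for $\ell=0$ and $k\neq 0$, $\phi_{k,0}(e_0)=1$; for $\ell\neq 0$, the monomials $g_n\bar g_{n-\ell}$ are pairwise distinct (because $n\neq n-\ell\pmod d$) and occur with nonzero coefficients $\omega^{-kn}$. Thus each $\phi_{k,\ell}$ is genuinely nonzero.

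Consequently, for each $(k,\ell)\neq(0,0)$ the set
\[
Z_{k,\ell}:=\{\,g\in \Sph^{2d-1}\subset\C^d\ :\ \phi_{k,\ell}(g)=0\,\}
\]
is a proper real-analytic subvariety of the unit sphere, hence closed and nowhere dense (of real codimension at least one). The complement $U_{k,\ell}:=\Sph^{2d-1}\setminus Z_{k,\ell}$ is therefore open and dense. The next step is to take the finite intersection
\[
U:=\bigcap_{(k,\ell)\in\Z_d\times\Z_d\setminus\{(0,0)\}} U_{k,\ell},
\]
which remains open and dense as a finite intersection of open dense subsets of a Baire space. By construction, $g\in U$ if and only if $\phi_{k,\ell}(g)\neq 0$ for all $(k,\ell)$, which by Theorem~\ref{thm:rankG} is exactly the condition $\rank G(g)=d^2$. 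Density of $U$ in particular guarantees $U\neq\emptyset$, yielding the existence of a complete Gabor POVM for every $d\geq 2$.

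There is no real obstacle to this argument: the bihomogeneity observation is the only subtle point, and it reduces vanishing on the sphere to vanishing as a polynomial, at which stage the explicit form of $\phi_{k,\ell}$ makes non-triviality immediate. The whole proof therefore amounts to applying Theorem~\ref{thm:rankG} and a standard Baire-category argument.
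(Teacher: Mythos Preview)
Your proof is correct and follows essentially the same approach as the paper: reduce via Theorem~\ref{thm:rankG} to showing that each $\phi_{k,\ell}$ is not identically zero on the sphere, and then take a finite intersection of open dense complements. The only cosmetic difference is that the paper verifies non-triviality of each $\phi_{k,\ell}$ by exhibiting explicit witness vectors (e.g.\ $g$ supported on $\{1,1+k\}$ for $k\neq 0$, and $g=e_0$ for $k=0$) and phrases the density in Zariski terms, whereas you argue via bihomogeneity that non-vanishing on the sphere is equivalent to formal non-vanishing of the polynomial and then invoke real-analyticity/Baire; both routes are equally valid.
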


\begin{proof}
Consider the unit sphere $S^{2d-1}\subset \C^d$ as an algebraic subvariety of the affine $2d$ space with coordinates $Re(g_i)$ and $Im(g_i)$ for $1\le i \le d$. The condition $\rank(G(g))=d^2$ is given by the polynomial conditions $\lambda_{k,\ell}(g)=d|\sum g_i\bar{g}_{i+k} \omega^{-i\ell}|^2\neq 0$, for all $(k,\ell)$. For each pair $(k,\ell)$ with $k\neq 0$ take $g=(1,0,\ldots,1,0,\ldots)/\sqrt{2}$, with $\supp(g)=\{1,1+k\}$, so we have $\lambda_{k,\ell}(g)=d/4\neq 0$. Likewise, for $k=0$ we take $g=(1,0,\ldots,0)$, and again $\lambda_{0,\ell}(g)\neq 0$. Thus for each $(k,\ell)$ the set $S_{k,\ell}:=\{g\in S^{2d-1} \ | \ \lambda_{k,\ell}(g)\neq 0\}$ is nonempty open dense in the Zariski topology, and so is $S=\bigcap_{k,\ell} S_{k,\ell}$. \cite{ACM12, NS11}
\end{proof}

Clearly, if the support of $g$ is the set $\{1,2,\ldots,s\}$ for $s\le d/2$, then $\rank (G(g))<d^2$, regardless of what is $g$, as we have $g_i\bar{g}_{i+s+1}=0$ for all $i$. However, we can refine Proposition \ref{prop:generic} for supports of cardinality $>d/2$.

\begin{proposition}\label{prop:generic_in_support}
    Let $S\subseteq \{1,\ldots,d\}$ be a subset of cardinality $>d/2$. Then there exists a unit vectors $g$ with $\supp(g)=S$ and $\rank (G(g))=d^2$.
\end{proposition}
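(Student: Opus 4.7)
The plan is to exploit Theorem~\ref{thm:rankG} together with the explicit formula from Theorem~\ref{thm:Geigenvalue}(ii): the condition $\rank(G(g))=d^2$ is equivalent to
$$P_{a,b}(g):=\sum_{n=0}^{d-1}g_n\,\overline{g_{n+a}}\,\omega^{bn}\neq 0\qquad\text{for every }(a,b)\in\Z_d\times\Z_d.$$
Working in the ambient real affine variety $W_S=\{g\in\C^d:\supp(g)\subseteq S\}\cong\R^{2|S|}$, each set $U_{a,b}=\{g\in W_S:P_{a,b}(g)\neq 0\}$ is Zariski open (as the non-vanishing locus of the real polynomial $|P_{a,b}|^2$), and so is $U_{\supp}=\{g\in W_S: g_n\neq 0\text{ for all }n\in S\}$. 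Since $W_S$ is irreducible, a finite intersection of non-empty Zariski open subsets is again non-empty and in fact dense. Moreover $P_{a,b}(\alpha g)=|\alpha|^2 P_{a,b}(g)$, so each $U_{a,b}$ is invariant under scaling; hence any non-zero vector in the intersection can be normalized to produce the required unit vector. It therefore suffices to show that no $P_{a,b}$ is identically zero on $W_S$.

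The witnesses are built by hand. For $a=0$, taking $g=e_{n_0}$ (the standard basis vector) for any $n_0\in S$ gives $P_{0,b}(g)=\omega^{bn_0}\neq 0$. For $a\neq 0$, the hypothesis $|S|>d/2$ forces $|S\cap(S-a)|\geq 2|S|-d>0$, so there exists $n_0\in S$ with $n_0+a\in S$ (indices mod $d$). I would then take $g_{n_0}=1$, $g_{n_0+a}=1+i$, and all other entries zero. When $2a\not\equiv 0\pmod d$, the only index contributing to $P_{a,b}(g)$ is $n=n_0$, yielding $(1-i)\omega^{bn_0}\neq 0$. When $2a\equiv 0\pmod d$ (so $d$ is even and $a=d/2$), both $n=n_0$ and $n=n_0+a$ contribute; using $\omega^{ba}=(-1)^b$ one obtains
$$P_{a,b}(g)=\omega^{bn_0}\bigl[(1-i)+(1+i)(-1)^b\bigr],$$
which equals $2\omega^{bn_0}$ for even $b$ and $-2i\,\omega^{bn_0}$ for odd $b$, in either case non-zero.

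The only genuine obstacle is the subcase $2a\equiv 0\pmod d$: a naive symmetric choice $g_{n_0}=g_{n_0+a}=1$ collapses $P_{a,b}(g)$ to $(1+(-1)^b)\omega^{bn_0}$, which vanishes for all odd $b$, so the two non-zero entries must be assigned distinct complex arguments in order to break the induced cancellation. Once this refinement is in place, every $U_{a,b}$ and $U_{\supp}$ is non-empty, their intersection is Zariski-dense in $W_S$, and normalizing any non-zero vector in it yields a unit vector $g$ with $\supp(g)=S$ and $\rank(G(g))=d^2$.
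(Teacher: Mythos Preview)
Your proof is correct and follows the same overall strategy as the paper: show that each $P_{a,b}$ (equivalently $\lambda_{a,b}$) is not identically zero on the irreducible affine space $W_S$, and then take the finite intersection of the resulting nonempty Zariski open sets. The execution differs in one respect worth noting. The paper constructs, for each $(k,\ell)$, a witness with \emph{full} support $S$ (entries equal to $1$ on $S$, boosted to $d$ at two carefully chosen positions) and bounds $|\lambda_{k,\ell}|$ away from zero via a triangle-inequality estimate; thus the witness already lies in the locus $\supp(g)=S$. You instead use sparse witnesses supported on only one or two points of $S$, compute $P_{a,b}$ exactly, and then recover the full-support condition by intersecting with the additional open set $U_{\supp}$. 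Your route is a bit more elementary (no inequality is needed) and makes explicit why the naive real-valued two-point witness fails when $a=d/2$, at the cost of handling that subcase separately; the paper's magnitude argument covers all $\ell$ uniformly once the dominant entry is placed.
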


\begin{proof}
    Like in the proof of Proposition \ref{prop:generic} we consider the algebraic subvariety $V_S\subseteq S^{2d-1}$ of all vectors $g$ with $\supp(g)=S$. It in enough to find for each pair $(k,\ell)$ a vector $g\in V_S$ such that $\lambda_{k,\ell}(g)\neq 0$. For each $k$, the intersection $T:=S\cap (k+S)\neq \emptyset$, where we define $k+S=\{k+i\ | \ i \in S\}$. Pick up $j\in T$ and let $g'$ be the vector such that $g'_i=0$ if $i\notin S$, $g'_i=1$ if $i\in S$ and $i\notin\{ j,k+j\}$, and $g'_i=d$ if $i\in \{j,k+j\}$. Let $g=g'/||g'||$. Then the vector $w_k$ defined by $(w_k)_i=g'_i\bar{g'}_{i+k}$ satisfies $(w_k)_j=d^2$, $(w_k)_i=0$ if $i\notin T$, $(w_k)_i=1$ if $i\in T$ and $i\notin\{ j,j+k,j-k\}$ and $(w_k)_i=d$ otherwise. Therefore $\lambda_{k,\ell}(g)=d|\sum_{i\in T} (w_k)_i\omega^{i\ell}|^2/||g'||^4\ge d|d^2-(\#T-1) \cdot d |^2/||g'||^4>0$.
\end{proof}

In the next result we collect a number of properties about the rank of $G(g)$. In particular, we give all possible values of this rank when  $g$ is a unit vector in $\C^d$ whose support has size at most $2$. In Theorem~\ref{thm:rankd} we will prove a more general  result than part (iii).

\begin{proposition}\label{Prop:proertiesrankG}
For a unit vector $g\in \C^d$, we denote by $\|g\|_0$ the number of nonzero entries in $g$.  The following statements hold.
\begin{enumerate}
    \item[(i)] $\text{rank}(G(g))\geq d.$
    \item[(ii)] If $d$ is odd, the rank of the Gramian $G(g)$ is also an odd number.
    \item[(iii)] Suppose that $d\geq 2$ and   $\|g\|_0=1$. Then   $\rank(G(g))=d$.
    \item[(iv)] Suppose that $\|g\|_0=2$. Then   $\rank(G(g))=3d, 3d-1, 2d, \frac{3}{2}d$ or $d$. 
\end{enumerate}

\end{proposition}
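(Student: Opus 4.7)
The plan is to reduce every part to Theorem~\ref{thm:Geigenvalue}, which identifies the eigenvalues of $G(g)$ as $\lambda_{a,b}=d|\ip{g}{M^b T^a g}|^2$, so that $\rank(G(g))$ equals the number of pairs $(a,b)\in\Z_d\times\Z_d$ with $\lambda_{a,b}\neq 0$. With this identification, parts (i)--(iv) become counting problems on the multiset $\{\lambda_{a,b}\}_{a,b}$.

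For (i), I would run a trace-versus-maximum argument. Each diagonal entry of $G(g)$ equals $\|\Pi_{k,\ell}\|_{HS}^2=1$, so $\sum_{a,b}\lambda_{a,b}=\operatorname{tr}(G(g))=d^2$. Setting $\phi_a(k)=g_k\overline{g_{k+a}}$, the triangle inequality followed by Cauchy--Schwarz yields $|\hat{\phi_a}(b)|\le\|\phi_a\|_1\le\|g\|^2=1$, hence $\lambda_{a,b}\le d=\lambda_{0,0}$; dividing the trace by the maximum eigenvalue forces at least $d^2/d=d$ nonzero eigenvalues. For (ii), I would invoke the symmetry $\lambda_{a,b}=\lambda_{-a,-b}$ recorded in the remark following Theorem~\ref{thm:Geigenvalue}. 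When $d$ is odd, the involution $(a,b)\mapsto(-a,-b)$ on $\Z_d\times\Z_d$ has $(0,0)$ as its unique fixed point, and $\lambda_{0,0}=d\neq 0$; every other nonzero eigenvalue lies in a genuine orbit of size two, so the total nonzero count is odd.

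Part (iii) reduces to a direct computation: every unit vector with $\|g\|_0=1$ has the form $e^{i\phi}e_j$ for some $j$ and $\phi$, and since rank is invariant under scalar phase one may take $g=e_j$. Then $M^kT^\ell e_j=\omega^{k(j+\ell)}e_{j+\ell}$, so $\ip{e_j}{M^kT^\ell e_j}=\omega^{-k(j+\ell)}\delta_{\ell,0}$, which is nonzero for exactly the $d$ pairs $(a,b)=(0,k)$.

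Part (iv) is the main obstacle and requires a case analysis. Using the Heisenberg-translation invariance of $\rank(G(g))$ (conjugation by $M^c$ or $T^c$ only multiplies inner products by unit modulus factors), I place $g=\alpha e_0+\beta e^{i\theta}e_s$ with $\alpha,\beta>0$, $\alpha^2+\beta^2=1$, and $s\in\{1,\ldots,d-1\}$. The sequence $\phi_a$ vanishes identically unless $a\in\{0,s,-s\}$, so only the bands $\ell\in\{0,s,-s\}$ contribute. When $s\neq d/2$ the bands $\ell=\pm s$ each produce inner products of constant modulus $\alpha\beta>0$, contributing $d$ nonzero eigenvalues apiece; on $\ell=0$ the inner product is $\alpha^2+\beta^2\omega^{-ks}$, which vanishes only when $\alpha=\beta$ and $\omega^{ks}=-1$. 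Branching on whether $s=d/2$, on whether $\alpha=\beta$, and on whether $\cos\theta$ or $\sin\theta$ vanishes then produces the five listed values $3d$, $3d-1$, $2d$, $\tfrac{3d}{2}$, $d$. The most delicate branch is $s=d/2$, where the two surviving bands collapse into $\ell\in\{0,d/2\}$ and the $\ell=d/2$ inner product takes the form $\alpha\beta(e^{-i\theta}+(-1)^k e^{i\theta})$; its vanishing pattern depends jointly on the parity of $k$ and on which of $\sin\theta,\cos\theta$ is zero, and it is in this branch that the values $2d$, $\tfrac{3d}{2}$, and $d$ arise.
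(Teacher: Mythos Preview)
Your arguments for (i)--(iii) are correct and essentially identical to the paper's: the trace-versus-maximum bound for (i), the involution $(a,b)\mapsto(-a,-b)$ with unique fixed point $(0,0)$ for (ii), and the direct count for (iii).

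For (iv) your case analysis mirrors the paper's, and both contain the same gap. In the branch $d$ even, $s\neq d/2$, $\alpha=\beta$, you correctly observe that the $\ell=0$ inner product $\alpha^2+\beta^2\omega^{-ks}$ vanishes precisely when $\omega^{ks}=-1$, but you then conclude this branch yields the value $3d-1$. That presumes the congruence $ks\equiv d/2\pmod d$ has at most one solution $k\in\Z_d$; in fact, whenever it is solvable it has exactly $\gcd(s,d)$ solutions, so this branch gives $\rank(G(g))=3d-\gcd(s,d)$. Concretely, for $d=8$, $s=2$ and $g=\tfrac{1}{\sqrt2}(1,0,1,0,0,0,0,0)$, the $a=0$ row has $\tfrac12(1+\omega^{2b})=0$ at $b\in\{2,6\}$, so the rank is $6+8+8=22\notin\{24,23,16,12,8\}$. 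The paper's proof makes the identical slip (it asserts ``$\|\hat w_0\|_0=d-1$ when there exists an integer $c$ such that $\omega^{c\kappa}=-1$''), so part (iv) as stated is in fact incomplete. Your overall strategy is sound; this branch simply needs to record $3d-\gcd(s,d)$ rather than only $3d-1$.
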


\begin{proof} Let $g\in \C^d$ be a unit vector. For each $\ell \in \Z_d$, we let $w_\ell=(g_i\overline{g_{i+\ell}})_{i=0}^{d-1}\in \C^d$. 

\begin{enumerate}
    \item[(i)]  By part (ii) of Theorem~\ref{thm:Geigenvalue} we see that $\lambda_{0,0}=d$. In addition, note that since the diagonal entries of $G(g)$ are $|\ip{M^kT^\ell g}{M^kT^\ell g}|^2=1$ where $(k,\ell)\in Z_d\times Z_d$, we have $\text{tr}(G(g))=d^2$. If $\rank(G(g))<d$, then $\text{tr}(G(g))<d^2$. This is a contradiction. 
    \item[(ii)] Suppose that $d$ is odd. Recall that $\rank(G(g))$ equals to the number of nonzero eigenvalues of $G$, and that $\lambda_{0,0}=d>0$. When $(a,b)\neq (0,0)$, then  $(a,b)\neq (d-a,d-b)$ and $\lambda_{a,b}=\lambda_{d-a, d-b}$. Consequently,  $\rank(G(g))=1+2\sum_{(a,b)\in S}  \sgn(\lambda_{a,b})$, where $S=\{(a,b)\ | 0\le a\le d-1,\ 0\le b\le \frac{d-1}{2},\ (a,b)\neq (0,0)\}$.
     \item[(iii)]  Without loss of generality we assume that $g_0=1,$ and $g_k=0$ for all $k\neq 0$. Then $\|\hat{w}_0\|_0=d$ and $\|\hat{w}_\ell\|_0=0$ for all $\ell\neq 0$.
    \item[(iv)] Without loss of generality we assume that $g_0,g_\kappa\neq 0$ for some $\kappa\neq 0$.  

The proof is divided into two cases. 
\begin{enumerate}
    \item Suppose that $d$ is odd.
    $$\begin{cases}
\|w_0\|_0=2,\\
\|w_\kappa\|_0=\|w_{-\kappa}\|_0=1,\\
\|w_\ell\|_0=0 \text{ for } \ell\neq 0,\kappa,-\kappa.
\end{cases} \Longrightarrow \,\, \,  \begin{cases}
\|\hat{w_0}\|_0=\|\hat{w}_\kappa\|_0=\|\hat{w}_{-\kappa}\|_0=d,\\
\|\hat{w}_\ell\|_0=0 \text{ for } \ell\neq 0,\kappa,-\kappa.
\end{cases}$$

It follows that, $\rank(G(g))=3d$.
    \item Suppose now that $d$ is even.
    If $\kappa\neq \frac{d}{2}$ and $|g_0|\neq |g_\kappa|$, then by the same arguments we have $\rank(G(g))=3d$.
    
    Suppose $\kappa\neq \frac{d}{2}$ and $|g_0|= |g_\kappa|$. Then $\|\hat{w}_0\|_0=d-1$ when there exists an integer $c$ such that $\omega^{c\kappa}=-1$, otherwise $\|\hat{w}_0\|_0=d$. So $\rank(G(g))=3d-1$ or $3d$.

    Next, suppose that $\kappa=\frac{d}{2}$, then $w_\kappa=w_{-\kappa}$. It follows that 
    $$
    \begin{cases}
        \|w_\kappa\|_0=\|w_0\|_0=2,\\
\|w_\ell\|_0=0 \text{ for } \ell\neq 0,\kappa.
    \end{cases}$$
    Since $\omega^\kappa= -1$, we conclude that 
    \[ \|\hat{w_0}\|_0=
    \begin{cases}
    \frac{d}{2} &|g_0|=|g_\kappa|\\
    d &|g_0|\neq |g_\kappa|
    \end{cases}
    \text{ and }
    \|\hat{w}_\kappa\|_0=\begin{cases} 
      d & g_0\overline{g_k}\neq \pm g_k\overline{g_0} \\
      \frac{d}{2} & g_0\overline{g_k}= \pm g_k\overline{g_0} \\
   \end{cases}
\]
  Consequently,   $\rank(G)=3d, 3d-1, 2d, \frac{3d}{2}$ or $d$.
\end{enumerate}
\end{enumerate}
\end{proof}

\subsection{Algebraic structure of Gabor POVMs}\label{subsec:algebraic}

	In this section we give another proof of Theorem \ref{thm:rankG} from an algebraic point of view. In particular, this approach allows us to list several actions that leave invariant the rank of $G(g)$.

	The Weyl-Heisenberg group is the subgroup $W\subseteq U_d(\C)$ generated by $M$ and $T$. It has order $d^3$ and every element of $W$ can be written uniquely as $\omega^aM^bT^c$ for integers $0\le a,b,c<d$. The elements of $W$ are monomial matrices. A \emph{monomial} matrix is a square matrix with a unique nonzero entry in each row and column, which is a phase. A matrix $X$ is monomial, if and only if it can be written (uniquely) as a product $DP$, where $D$ is diagonal with diagonal entries of modulus $1$, and $P$ is a permutation matrix.
	Gabor systems give rise to a monomial representation of $W$ on the vector space $\C[\Fr(g)]$, $g$ is a symbolic vector of indeterminates. Here $\C[S]$ denotes the vector space of formal complex linear combinations of the set $S$. Explicitly, $w=\omega^aM^bT^c$ acts on basis elements $g_{k,\ell}=M^kT^\ell g$ by
	$$ \omega^aM^bT^c g_{k,\ell}=\omega^{a-ck}g_{k+b,\ell+c}.$$
	We introduce a monomial matrix $\mathcal M(w)$ of size $d^2$ to denote this action. This matrix is indexed by $(t,s)\in \Z_d\oplus \Z_d$ and its $((t,s),(t',s'))$ entry is given by
	$$ \mathcal{M}(w)_{((t,s),(t',s'))}=\begin{cases}\omega^{a-ct'}\, \, \, \text{if}\, \, \, (t, s)=(t'+b, s'+c)\\
	0\, \, \, \text{else}.\end{cases}$$

	It is clear by the construction that $\mathcal M$ is a group homomorphism: $\mathcal M(ww')=\mathcal M(w)\mathcal M(w')$. We define another monomial representation, by $\mathcal M'(w):=|\mathcal M(w)|$, the entrywise absolute-value.\\

	Consider the Gram matrix $H(g):=\Fr(g)^* \Fr(g)$, for a unit vector $g$. Note that this is the Gram matrix of the Gabor frame $\Fr(g)$, and is different from the Gram matrix $G(g)$ of the rank-one matrices we have discussed thus far.  In particular, for every $d\geq2$, $H(g)$ is a $d^2\times d^2$ matrix, and its eigenvalues are only $0$ and $d$.
	Moreover,  $\tfrac{1}{\sqrt{d}}H(g)$ is a self adjoint idempotent of rank $d$. The matrix $H(g)$ is invariant under the monomial action $\mathcal M$:
	$$\mathcal M(w)H(g)\mathcal M(w)^*=H(g), \ \forall w\in W.$$ The collection 
	$$\mathfrak A=\mathfrak A(\mathcal M):= \{X\in M_{d^2}(\C)\ | \ \forall w\in W, \ \mathcal M(w)X\mathcal M(w)^*=X\}$$ is a matrix algebra, closed under conjugate transpose. Similarly we have the matrix algebra
	 $$\mathfrak A'=\mathfrak A(\mathcal M')=\{X\in M_{d^2}(\C)\ | \ \forall w\in W, \ \mathcal M'(w)X\mathcal M'(w)^*=X\}.$$
	 Both algebras have dimension $d^2$.  In the following theorem, if $Z$ is a group, let $\C[Z]$ denote the group algebra over $\C$.  In addition, we shall make use of the $\C$-algebra isomorphism
	 \be\label{theta} \theta: \frac{\C[X]}{X^d-1}\to \C^d,\ee given by $X\mapsto (\omega^i)_{0\le i<d}$.
	 
	 \begin{theorem}\label{thm:1}
	 	There is an isomorphism of algebras over $\C$,
	 	\be
	 		\label{isom1} \eps: \mathfrak A \xrightarrow{\cong} M_d(\C).\ee
	 		There is a sequence of isomorphisms of algebras over $\C$
	 		\begin{multline} \label{isom2} \mathfrak A'  \xrightarrow{\cong} \C[\Z_d\oplus \Z_d]\xrightarrow{\cong} \C[\Z_d]\tensor_\C \C[\Z_d] \xrightarrow{\cong} \C[X]/(X^d-1)\tensor_\C \C[X]/(X^d-1)\\ \xrightarrow{\theta\tensor \theta\ \  \cong} \C^d \tensor_\C \C^d\cong \C^{d^2}.	 		
	 	\end{multline}
 	 The first isomorphism respects the conjugate-transpose.
 	 Let $\mu:\mathfrak A'\xrightarrow{\cong} \C^{d^2}$ denote the composition. Under $\mu$ the conjugate transpose becomes complex conjugation on $\C^{d^2}$.
	 \end{theorem}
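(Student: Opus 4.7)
The plan is to decompose the representations $\mathcal M$ and $\mathcal M'$ of $W$ into irreducibles, read off their commutants via Schur's lemma, and then track the $*$-structure through the explicit identifications in \eqref{isom2}.

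For $\eps$, I would first identify $\C^{d^2}$ with $\C^d\tensor \C^d$ via $g_{k,\ell}\leftrightarrow e_k\tensor e_\ell$. Using the Heisenberg commutation $T^cM^k=\omega^{-ck}M^kT^c$, one rewrites
$$\mathcal M(\omega^aM^bT^c)(e_k\tensor e_\ell)=(\omega^aT^bM^{-c}e_k)\tensor (T^ce_\ell),$$
exhibiting $\mathcal M$ as an external tensor product $\phi\tensor \psi$ of two unitary representations of $W$ on $\C^d$ defined by $\phi(\omega^aM^bT^c):=\omega^aT^bM^{-c}$ and $\psi(\omega^aM^bT^c):=T^c$; that both are homomorphisms follows from the same commutation relation. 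A one-line character computation then gives $\chi_\phi(\omega^aM^bT^c)=d\omega^a\delta_{b,0}\delta_{c,0}$, so $\|\chi_\phi\|^2=1$ and $\phi$ is irreducible of dimension $d$. The representation $\psi$ is trivial on the center and factors through $W/Z\cong \Z_d\oplus \Z_d$; as a representation of this abelian group it is the pullback along the second projection of the regular representation of $\Z_d$, and hence decomposes as $\bigoplus_j \chi_j$, where $\chi_j(\omega^aM^bT^c):=\omega^{jc}$. Since $\chi_{\phi\tensor \chi_j}=\chi_\phi$ (the factor $\omega^{jc}$ is killed by $\delta_{c,0}$), each summand $\phi\tensor \chi_j$ is equivalent to $\phi$, so $\mathcal M\cong \phi^{\oplus d}$. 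Schur's lemma then identifies $\mathfrak A=\mathrm{End}_W(\mathcal M)$ with $M_d(\C)$, and unitarity of $\phi$ makes this a $*$-isomorphism.

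For the chain \eqref{isom2}, I observe that taking absolute values replaces every $\omega^{a-ck}$ by $1$, so $\mathcal M'(\omega^aM^bT^c)$ is the permutation matrix of the translation $(k,\ell)\mapsto (k+b,\ell+c)$ on $\Z_d\oplus \Z_d$, independent of $a$. Thus $\mathcal M'$ descends to the regular representation of the abelian group $\Z_d\oplus \Z_d$, which coincides with its own commutant and is linearly spanned by the $\mathcal M'(g)$; sending $\mathcal M'(g)\mapsto g$ yields $\mathfrak A'\cong \C[\Z_d\oplus \Z_d]$. The remaining maps are the textbook identifications $\C[G\oplus H]\cong \C[G]\tensor \C[H]$, $\C[\Z_d]\cong \C[X]/(X^d-1)$ (sending the generator to $X$), and $\theta\tensor \theta$ as in \eqref{theta}.

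Finally, I would check that $\mu$ converts conjugate-transposition on $\mathfrak A'$ into componentwise complex conjugation on $\C^{d^2}$. Unitarity of $\mathcal M'$ gives $\mathcal M'(g)^*=\mathcal M'(g^{-1})$, so the $*$ on $\mathfrak A'$ transports to the involution $\sum_g a_g\, g\mapsto \sum_g \overline{a_g}\,g^{-1}$ on $\C[\Z_d\oplus \Z_d]$. This passes through the polynomial identification to $f(X,Y)\mapsto \overline{f(X^{-1},Y^{-1})}$, and since $\theta(X^{-k})=(\omega^{-ki})_i=\overline{\theta(X^k)}$, the Fourier map $\theta\tensor \theta$ converts it into entrywise complex conjugation on $\C^d\tensor \C^d\cong \C^{d^2}$. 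The main technical step will be the decomposition $\mathcal M\cong \phi^{\oplus d}$, which rests on the short character computation for $\phi$; everything else is essentially bookkeeping with group algebras.
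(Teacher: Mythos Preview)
Your argument is correct, and for the chain \eqref{isom2} it is essentially the paper's argument, just phrased in the language of the regular representation and its commutant rather than building the map $\C[\Z_d\oplus\Z_d]\to\mathfrak A'$ by hand and comparing dimensions.

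For the isomorphism $\eps$, however, you take a genuinely different route. The paper constructs $\eps^{-1}$ explicitly on the basis $\{M^iT^j\}$ of $M_d(\C)$: for each $(i,j)$ it writes down a monomial matrix $E(i,j)\in\mathfrak A$ by propagating a single prescribed entry via the $\mathcal M$-invariance, sets $\eps^{-1}(M^iT^j)=\omega^{ij}E(i,j)$, and then checks multiplicativity and bijectivity by a direct coefficient comparison. Your approach is structural: you factor $\mathcal M\cong\phi\otimes\psi$ on $\C^d\otimes\C^d$, compute $\chi_\phi$ to see that $\phi$ is irreducible, split $\psi$ into characters $\chi_j$, and observe that $\chi_\phi\cdot\chi_j=\chi_\phi$ forces $\mathcal M\cong\phi^{\oplus d}$, whence Schur gives $\mathfrak A\cong M_d(\C)$. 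The paper's version yields an explicit formula for $\eps^{-1}$ (useful if one later wants to push specific elements such as $H(g)$ through the isomorphism), while yours explains \emph{why} the commutant must be a full matrix algebra without any computation beyond a one-line character identity, and makes the $*$-compatibility automatic once one notes that a unitary equivalence $\mathcal M\cong\phi^{\oplus d}$ exists. Two small remarks: what you call an ``external'' tensor product is really the internal (diagonal) tensor product of two $W$-representations; and your claim that unitarity of $\phi$ yields a $*$-isomorphism is correct but deserves one extra sentence---choose the equivalence $\mathcal M\cong\phi^{\oplus d}$ to be unitary, so that conjugation identifies $\mathfrak A$ with $I_d\otimes M_d(\C)$ as $*$-algebras.
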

 
     \begin{proof}
     	Let us construct first the inverse map $\eps^{-1}:M_d(\C)\to \mathfrak{A}$. The complex vector space $M_d(\C)$ has the special basis $\{M^iT^j\}_{0\le i,j<d}$. It is enough to define $\eps^{-1}$ on this basis, and show that it is an algebra homomorphism. For each $(i,j)$, we first construct a matrix $E(i,j)\in \mathfrak A$, which satisfies $E(i,j)_{(p,q),(0,0)}=\delta_{(i,j),(p,q)}$. 
     	The matrix $E(i,j)$ is uniquely determined by these properties. Indeed, by the invariance under the monomial matrices $\mathcal M(w)$, 
     	\begin{multline*}
     		E(i,j)_{(a+i,b+j),(a,b)}=\mathcal M(M^aT^b)_{(a+i,b+j),(i,j)} E(i,j)_{(i,j),(0,0)}\mathcal M(T^{-b}M^{-a})_{(0,0),(a,b)}\\ = \omega^{-bi} \cdot 1 \cdot \mathcal M(\omega^{-ab}M^{-a}T^{-b})_{(0,0),(a,b)}=\omega^{-bi}\cdot 1 \cdot \omega^{-ab+ab}=\omega^{-bi}.
     	\end{multline*} 
        A similar computation shows that $E(i,j)_{(a',b'),(a,b)}=0$ if $(a',b')\neq (a+i,b+j)$. This shows the uniqueness of $E(i,j)$. On the other hand, it is easy to check that if we define $E(i,j)$ by these formulae, then $E(i,j)\in \mathfrak A$.
        Note that $E(i,j)$ is monomial and that $|E(i,j)|=|\mathcal M(M^iT^j)|$.
        
        Next we compare $E(i+s,j+t)$ with $E(i,j)E(s,t)$. We have
        \begin{multline*}
        	(E(i,j)E(s,t))_{(a+i+s,b+j+t),(a,b)}=E(i,j)_{(a+i+s,b+j+t),(a+s,b+t)}E(s,t)_{(a+s,b+t),(a,b)}\\
        	=\omega^{-(b+t)i}\omega^{-bs}=\omega^{-b(i+s)}\omega^{-it}.
        \end{multline*}
        We also have 
        $$E(i+s,j+t)_{(a+i+s,b+j+t),(a,b)}=\omega^{-b(i+s)}.$$ Hence
        $E(i,j)E(s,t)=\omega^{-it}E(i+s,j+t)$. We try to define \be\label{eq:3} \eps^{-1}(M^iT^j)=\omega^{f(i,j)}E(i,j)\ \text{for some } f(i,j)\in \Z_d,\ee and we extend $\eps^{-1}$ by linearity to a vector-space homomorphism.  We wish to find an $f$, such that $\eps^{-1}$ will be actually an algebra homomorphism. A necessary and sufficient condition for this is that $\eps^{-1}(M^iT^j\cdot M^sT^t)=\eps^{-1}(M^iT^j)\cdot\eps^{-1}(M^sT^t)$. Then using \eqref{eq:3} and equality $M^iT^j\cdot M^sT^t=\omega^{-js}M^{i+s,j+t}$, we must have
        $$f(i,j)f(s,t)=\omega^{-js-it}f(i+s,j+t).$$
        This condition is satisfied by the choice $f(i,j)=\omega^{ij}$, so
        \be \label{eq:4}\eps^{-1}(M^iT^j):=\omega^{ij}E(i,j)\ee extends to a $\C$-algebra homomorphism $M_n(\C)\to \mathfrak A$.
        
        For $(a,b)\neq (c,e)$ modulo $d$, $\eps^{-1}(M^aT^b)$ and $\eps^{-1}(M^cT^e)$ have disjoint supports, hence $\eps^{-1}$ is injective. By comparing dimensions we conclude that $\eps^{-1}$ is an algebra isomorphism. Since both $M^iT^j$ and $\eps^{-1}(M^iT^j)$ are monomial matrices, then their conjugate-transpose is equal to their inverse, and $\eps^{-1}$ being a ring isomorphism respects matrix inverses. Hence $\eps$ commutes with the conjugate-transpose. This completes the first part.\\
     	
     	The map $\alpha:\Z_d \oplus \Z_d \to GL_{d^2}(\C)$ given by $\alpha(s,t)=\mathcal M'(M^aT^b)$ is a group homomorphism. Then we can extend $\alpha$ to a $\C$-algebra map $\eps':\C[\Z_d\oplus\Z_d] \to M_{d^2}(\C)$. This map is injective since as above $\{\mathcal M'(M^aT^b)\}_{(a,b)}$ have disjoint supports. Also, as $ww'$ and $w'w$ differ only by a phase, $\mathcal M'(w'w)=\mathcal M'(ww')=\mathcal M'(w)\mathcal M'(w')$, and in particular $\mathcal M'(w')$ is stable under conjugation with $\mathcal M'(w)$. It follows that the image of $\mathcal \eps'$ is in $\mathfrak A'$. Comparing dimensions, we obtain the first isomorphism in \eqref{isom2}. The other isomorphisms are well known and natural. They are given by the maps $[(a,b)]\mapsto [a]\tensor [b] \mapsto X^a \tensor X^b \mapsto [1,\omega^a,\omega^{2a},\ldots \omega^{(d-1)a}]\tensor [1,\omega^b,\omega^{2b},\ldots \omega^{(d-1)b}]\mapsto (\omega^{ia}\omega^{jb})_{i,j}$. Under those maps, the conjugate-transpose in $\mathfrak A'$ is compatible with the negation map $[(a,b)]\mapsto [(-a,-b)]$, which translates into complex conjugation at the rightmost term.

     	\end{proof}

     We remark that the isomorphism \eqref{theta}  $\theta:\C[X]/(X^d-1)\to \C^d$ is given by the Chinese Remainder Theorem for polynomials. That is, since $X^d-1=\prod_i(X-\omega^i)$, then
     $$\C[X]/(X^d-1)\cong \bigoplus_i \C[X]/(X-\omega^i)\cong \C^d.$$ The map $\theta$ takes a polynomial $f$ to the vector $(f(\omega^i))_i$. Then this linear operation is nothing but the discrete Fourier transform. If we write $f(X)=\sum_{i=0}^{d-1}f_iX^i\in \C[X]/(X^d-1)$ we have  $\theta(f)=\sqrt{d}F\cdot [f_0,f_1,\ldots,f_{d-1}]^T\in \C^d.$ Given an element $X\in \mathfrak A$, the matrix $X^{(2)}$ defined by $X^{(2)}_{i,j}:=|X_{i,j}|^2$ is a member of $\mathfrak A'$. In the spacial case $X=H(g)$, writing $\Fr(g)=\{g_{0,0},\ldots,g_{d,d}\}$ and letting $\Pi_{i,j}=g_{i,j}g_{i,j}^*$, then $G(g):=X^{(2)}_{i,j}=Tr(\Pi_{i,j}\Pi_{i,j}^*)\in \mathfrak A'$, and $rank \ G(g)$ is the dimension of the complex vector space spanned by the $\Pi_{i,j}$. We have
     
     \begin{lemma}\label{lem:ranklem}
     	Under the isomorphism \eqref{isom2} $\mu:\mathfrak A' \to \C^{d^2}$, the rank of $X\in \mathfrak A'$ equals the number of nonzero entries in $\mu(X)$.
     \end{lemma}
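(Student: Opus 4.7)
The plan is to exploit the isomorphism $\mu:\mathfrak A'\to \C^{d^2}$ established in Theorem~\ref{thm:1}. Since $\C^{d^2}$ carries pointwise multiplication, it is the product of $d^2$ copies of $\C$, so its minimal orthogonal idempotents are the standard basis vectors $e_1,\ldots,e_{d^2}$, which satisfy $e_i e_j=\delta_{ij}e_i$ and $\sum_i e_i=(1,\ldots,1)$. Transporting these back through $\mu^{-1}$ yields matrices $E_i:=\mu^{-1}(e_i)\in \mathfrak A'\subseteq M_{d^2}(\C)$ with $E_iE_j=\delta_{ij}E_i$, and any $X\in \mathfrak A'$ decomposes as $X=\sum_i\lambda_i E_i$ where $\mu(X)=(\lambda_1,\ldots,\lambda_{d^2})$. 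Once each $E_i$ is shown to be a rank-one matrix in $M_{d^2}(\C)$, the direct-sum structure immediately forces $\mathrm{rank}(X)=\#\{i:\lambda_i\neq 0\}$, which is the lemma.

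The only real work is to identify $\sum_i E_i$ with the identity matrix $I_{d^2}\in M_{d^2}(\C)$. I would do this by tracing $(1,\ldots,1)\in \C^{d^2}$ backwards through the chain \eqref{isom2}: it is the image of $1\tensor 1\in \C^d\tensor \C^d$ under $\theta\tensor\theta$ (since $\theta$ sends the constant polynomial $1$ to the all-ones vector), which lifts to $1\tensor 1$ in $\C[X]/(X^d-1)\tensor \C[X]/(X^d-1)$, hence to the identity class $[(0,0)]\in \C[\Z_d\oplus \Z_d]$, and finally to $\mathcal M'(M^0T^0)=I_{d^2}$. Therefore $\sum_i E_i=I_{d^2}$, the $E_i$ form a complete system of orthogonal idempotents in $M_{d^2}(\C)$, and one obtains a direct-sum decomposition $\C^{d^2}=\bigoplus_{i=1}^{d^2} V_i$ with $V_i=E_i(\C^{d^2})$ and $\sum_i\dim V_i=d^2$. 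Since each $E_i$ is a nonzero idempotent, $\dim V_i\ge 1$; with $d^2$ summands totalling $d^2$, the pigeonhole principle forces $\dim V_i=1$, i.e.\ $\mathrm{rank}(E_i)=1$.

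The conclusion then follows immediately: $X=\sum_i\lambda_i E_i$ acts as multiplication by $\lambda_i$ on the one-dimensional $X$-invariant subspace $V_i$, so $X$ is diagonalizable with spectrum $\{\lambda_i\}_{i=1}^{d^2}$ (each of multiplicity one), and therefore $\mathrm{rank}(X)$ equals the number of nonzero entries of $\mu(X)$. I do not anticipate any genuine obstacle beyond the bookkeeping in the second paragraph verifying that the algebra identity of $\mathfrak A'$ coincides with $I_{d^2}$; once that is pinned down, the rest is the standard Wedderburn-type decomposition of the commutative semisimple $\C$-algebra $\C^{d^2}$, transported across $\mu$.
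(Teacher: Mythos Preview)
Your proof is correct and follows essentially the same approach as the paper: pull back the $d^2$ orthogonal idempotents of $\C^{d^2}$ to $\mathfrak A'$, use that they sum to $I_{d^2}$ to decompose $\C^{d^2}$ into $d^2$ nonzero invariant subspaces, and conclude by dimension count that each is one-dimensional. You are slightly more explicit than the paper in verifying that the unit of $\mathfrak A'$ coincides with $I_{d^2}$ by chasing $(1,\ldots,1)$ back through the chain \eqref{isom2}; the paper takes this for granted and instead phrases the nonvanishing of $e_jV$ via faithfulness of the $\mathfrak A'$-action, but the substance is identical.
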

 
     \begin{proof}
     	By the isomorphism $\mu$, the algebra $\mathfrak A'$ contains a list of $d^2$ nontrivial idempotents $e_1,\ldots, e_{n^2}$, summing up to $1$ and satisfying $e_ie_j=0$ for $i\neq j$. On letting $\mathfrak A'$ act on a the vector space $V=\C^{d^2}$, we have $V=\bigoplus_j e_jV$, and this decomposition respects the action of $\mathfrak A'$. As $\mathfrak A'$ acts faithfully on $V$, and $e_k\mathfrak A'$ acts as zero on $e_jV$ for $j\neq k$, then $e_jV\neq 0$ for all $j$, and by equating dimensions we conclude that each $e_jV$ is 1-dimensional. Hence $\mu$ is nothing but a simultaneous diagonalization of the algebra $\mathfrak A'$ and the lemma follows.
     \end{proof}

\section{Invariance properties of the rank of Gabor POVMs}\label{sect:rankinvariance} 
In this section we identify a number of transformations leaving the rank of $G(g)$ invariant, and moreover leave invariant the multiset of the internal angles between the vectors of $\mathcal G(g)$. We exploit this to first classify all unit-norm vectors $g$
for which the rank of $G(g)$ is $d$. Next, we introduce a notion of an automorphism group of Gabor frames, and construct examples of $m$-distance sets with small $m$. Finally, we prove that $rank(G(g))$ cannot take a value strictly between $d$ and $2d$ when $d>2$ is a prime.\\

For a nonzero vector $g=(g_0, g_1, \hdots, g_{d-1})^{T}\in \C^d$, we let  
 \[a_{k,\ell}=a_{k,\ell}(g):=\sum_n g_n\bar g_{n+k}\omega^{n\ell}.\]

We recall that the $k$-translation $k+S$ of a subset $S\subseteq \Z_d$, is the set $\{k+s| \ s\in S\}$. We shall make the convention that if $r$ is a real number, then $\omega^r:=e^{2\sqrt{-1}r\pi/d}$. For $m\in \Z_d^\times$, let $m^{-1}\in \Z_d^\times$ denote its group inverse. Given $g\in \C^d$, it is not difficult to prove that each of the following transformations of $g$ resulting in a vector $h$ preserves the ranks \(\rank(G(h))=\rank(G(g))\) and the following transformation rules:

\begin{enumerate} \label{actions}
	\def\labelenumi{\arabic{enumi}.}
	\item \textbf{\emph{Phase}}: Let $h=cg$ for $|c|=1$. Then $a_{k,\ell}(h)=a_{k,\ell}(g)$.
	\item
	\textbf{\emph{Additive translation:}} Let \(h_i=g_{i+t}\).
	We have \be a_{k,\ell}(h)=\omega^{-t\ell}a_{k\ell}(g).\ee
	\item
	\textbf{\emph{Multiplicative}}: Let \(h_i=g_{mi}\), where 
	\(gcd(m,d)=1\). We have
	\be\label{mult1}  a_{k,\ell}(h)=a_{mk,m^{-1}\ell}(g).\ee
	\item
	\textbf{\emph{(Phase) Quadratic:}} Let
	\(h_i=g_i\omega^{a\binom{i}{2}+bi+c}\), where$c$ is a real number, $a$ is an integer,  and $b$ is an integer of a half integer such that 
	$a(d-1)/2+b$ is an integer. We have 
	\be\label{phase1} a_{k,\ell}(h)=a_{k,\ell-ak}(g)\omega^{-bk-a\binom{k}{2}}.\ee 
	We remark that the condition on $b$ implies that the transformation is well-defined if we consider the index $i$ as real integer. 
	\item[4'.] \textbf{\emph{(Phase) Quadratic:}} Suppose that  $\supp(g)\subset \kappa \Z_d$, for some positive integer $\kappa | d$, and let
	\(h_i=g_i\omega^{\tfrac{a}{\kappa}\binom{i}{2}+bi+c}\) for $i\in supp(g)$ and $h_i=0$ otherwise, where $c$ is a real number, $a$ is an integer and $b$ is an integer of a half integer such that $a(d-1)/2+b$ is  an integer. We have for every integer $s$
	\be a_{s\kappa,\ell}(h)=a_{s\kappa,l-as}(g)\omega^{-bs\kappa-\tfrac{a}{\kappa}\binom{s\kappa}{2}}.\ee
\end{enumerate}

\subsection{Characterization of rank $d$ Gabor POVMs}\label{subsec:rankd}
In this section give a complete characterization of all vectors $g$ leading to rank $d$ Gabor POVMs. We note that this generalizes part (iii) of Proposition~\ref{Prop:proertiesrankG}.

\begin{theorem}\label{thm:rankd}
Let $r$ be a divisor of $d$, and define a vector 
$g:=g(r,d)\in \C^d$  by
$$g(r,d)_i=\begin{cases}
   1 & i\equiv 0 \mod r\\
   0 & \text{ otherwise}.
\end{cases}$$
Then the rank of $G(g)$ is $d$, and $\mathcal{G}(g)$ is a 2-distance set. Conversely,  up to translation and phase quadratic transformations, a normalized vector $g$ has rank $d$, if and only if $g=\tfrac{1}{\sqrt{d/\kappa}}g(\kappa,d)$ for some $\kappa | d$.
\end{theorem}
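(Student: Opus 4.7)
The plan is to handle the forward direction by a direct computation with the formula for $a_{k,\ell}$ from Theorem~\ref{thm:Geigenvalue}, while the converse is proved by exploiting the fact that rank $d$ is the minimum possible value (Proposition~\ref{Prop:proertiesrankG}(i)) to force a rigid structure on $g$ through a chain of tight inequalities. For $g=g(r,d)$, the product $g_n\bar g_{n+k}$ is nonzero only when both $n$ and $n+k$ lie in $r\Z_d$, forcing $r\mid k$; for $k=rs$, $a_{k,\ell}(g)$ reduces to the geometric sum $\sum_{m=0}^{d/r-1}\omega^{rm\ell}$, which vanishes unless $(d/r)\mid \ell$. Thus exactly $r\cdot(d/r)=d$ coefficients are nonzero and $\rank G(g)=d$ by Theorem~\ref{thm:rankG}. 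The same kind of calculation shows that $\ip{g}{M^kT^\ell g}$ takes only the values $0$ and $1$, so $\mathcal{G}(g)$ is a 2-distance set.

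For the converse, assume $g$ is unit norm with $\rank G(g)=d$. Parseval's identity gives $\sum_{k,\ell}|a_{k,\ell}|^2=d$ while Cauchy--Schwarz yields $|a_{k,\ell}|\le \sum_n |g_n||g_{n+k}|\le 1$. Since only $d$ terms are nonzero and their squares sum to $d$, each nonzero $|a_{k,\ell}|$ must equal exactly $1$. The equality case of Cauchy--Schwarz then yields $|g_n|=|g_{n+k}|$ for every $n$ whenever $a_{k,\ell}\ne 0$, so the set $A:=\{k:\exists\ell,\; a_{k,\ell}\ne 0\}$ coincides with the translation stabilizer of $(|g_n|)_n$, which has the form $r\Z_d$ for some $r\mid d$. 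Applying Parseval to $f_k(n):=g_n\bar g_{n+k}$ for $k\in A$ gives $|B_k|=d\sum_n|g_n|^4$, independent of $k$, and the rank count $|A|\cdot|B_k|=d$ forces $\sum_n|g_n|^4=r/d$. Decomposing $\supp(g)$ into $A$-cosets $C_1,\ldots,C_s$ on which $|g|$ takes constant values $\alpha_i>0$, the identities $\sum_i\alpha_i^2=r/d$ and $\sum_i\alpha_i^4=(r/d)^2$ combine into $(\sum_i\alpha_i^2)^2=\sum_i\alpha_i^4$, forcing $s=1$. After a translation, $\supp(g)=r\Z_d$ with $|g_n|=\sqrt{r/d}$.

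To handle the phases, write $g_n=\sqrt{r/d}\,e^{i\theta_n}$ on $r\Z_d$ and set $\Theta(m):=\theta_{rm}$ for $m\in\Z_{d/r}$. The phase alignment enforced by $|a_{k,b}|=1$ translates into: for each $s\in\Z_{d/r}$ there exists some $b(s)$ such that $\Theta(m+s)-\Theta(m)+2\pi b(s)m/(d/r)$ is independent of $m$. The cocycle identity $\Delta_{s_1+s_2}(m)=\Delta_{s_1}(m+s_2)+\Delta_{s_2}(m)$, combined with affineness of each $\Delta_s$, then forces $b(\cdot)$ to be linear and $\Theta$ to be a quadratic polynomial in $m$ of precisely the shape that phase quadratic transformation rule 4$'$ (with $\kappa=r$) can cancel. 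Applying such a transformation produces $g=g(r,d)/\sqrt{d/r}$.

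The main obstacle I anticipate is the phase step: matching the integrality and parity conditions inherited from the cocycle computation to those required by rule 4$'$ (which depend on the parities of $r$ and $d/r$) requires delicate bookkeeping of the admissible coefficient ranges. A lesser technicality is cleanly extracting the single-coset conclusion in the magnitude step from the extremal identity $(\sum\alpha_i^2)^2=\sum\alpha_i^4$, which follows only because every $\alpha_i$ is strictly positive.
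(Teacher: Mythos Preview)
Your argument is correct, and the forward direction together with the opening observation that every nonzero $|a_{k,\ell}|$ equals $1$ (from $\operatorname{tr} G(g)=d^2$, rank $d$, and $|a_{k,\ell}|\le 1$) matches the paper exactly.

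Where you diverge is in deducing the support/modulus structure and the phases. For the support, the paper takes the \emph{minimal} positive $\kappa$ with $i,i+\kappa\in\supp(g)$, uses $|a_{\kappa,\ell}|=1$ and Cauchy--Schwarz to get $\supp(g)=\kappa+\supp(g)$, and then minimality forces a single coset. Your route---identifying $A=\{k:\exists\ell,\ a_{k,\ell}\neq 0\}$ with the translation stabilizer of $(|g_n|)$, then using Parseval to get $|B_k|=d\sum|g_n|^4$ and the extremal identity $(\sum\alpha_i^2)^2=\sum\alpha_i^4$---is a genuinely different and rather clean argument; it trades the minimality trick for a counting/moment computation and yields constant modulus simultaneously.

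For the phases, the paper's approach is considerably simpler than yours: it applies rule~4$'$ \emph{first} to shift the unique admissible $\ell$ for $k=\kappa$ to $\ell=0$, so that $|a_{\kappa,0}|=1$ directly gives the linear recursion $g_i=\alpha g_{i+\kappa}$ with $|\alpha|=1$ and $\alpha^{d/\kappa}=1$; a single linear phase (rule~4) then kills $\alpha$. Your cocycle argument correctly produces the quadratic $\Theta(m)=\Theta(0)+\sigma\binom{m}{2}+c_1 m$, but matching this to a single application of rule~4$'$ does involve the parity bookkeeping you anticipated. The easiest way to finish your version is to split the cancellation: use rule~4$'$ with $a=-\ell_0$ to kill the $\binom{m}{2}$ term (this always works), observe that the residual linear phase satisfies $\alpha^{d/r}=1$ from periodicity, and then apply rule~4 with $a=0$---which is still a ``phase quadratic'' transformation in the sense of the statement---to remove it. This is exactly the paper's two-step maneuver and avoids the case analysis entirely.
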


\begin{proof}
For $g$ defined as above, we see that $a_{k,\ell}(g)=\sum_n g_n\bar g_{n+k}\omega^{n\ell}=0$ if $k$ is not a multiple of $r$. Otherwise $a_{tr,\ell}=\sum_{n=rj}g_{rj}\bar g_{r(j+t)}\omega^{n\ell}=\sum_{n=rj}\omega^{n\ell}$. This quantity vanishes if and only if $\omega^{r\ell}=1$. Equivalently $\ell$ is a multiple of $d/r$. Hence $a_{tr,sd/r}(g)$ are precisely the ones that do not vanish and there are $d$ of them.

	It remains to prove the only if part. Assume that $\rank(G(g))=d$ for a unit vector $g$.
	One eigenvalue of $G(g)$ is $d|a_{0,0}(g)|^2=d\left|\sum_i |g_i|^2\right|=d$.
	Notice that $|a_{k,\ell}(g)|\le \sum_i |g_i||g_{i+k}|\le \|g\|^2 =1$ by the Cauchy-Schwartz inequality, hence $d$ is the largest eigenvalue. 
	As $\text{tr}(G(g))=d^2$ and the rank is $d$, we must conclude that $d$ appears with multiplicity $d$, and all other eigenvalues are $0$.

	Let $S=\supp(g)$, and let $\kappa$ be the smallest positive integer such that $i,\kappa+i\in S$. Then the vector $(g_ig_{i+\kappa})_i\neq 0$, and $(a_{\kappa,\ell})_\ell$ is its discrete Fourier transform. It follows that $a_{\kappa,\ell}\neq 0$ for some $\ell$. If $\kappa=d$ then $S$ is a singleton and $g=g(d,d)$ up to phase and translation. So we shall assume from now that $\kappa<d$.
	
	Now $|a_{\kappa,\ell}(g)|=|a_{0,0}(g)|=1$ implies that 
	$$1=|\sum_{i\in S, i+\kappa\in S}g_i\bar{g}_{i+\kappa}\omega^{i\ell}|\leq \sum_{i\in S, i+\kappa\in S}|g_ig_{i+\kappa}|\leq \left(\sum_{i\in S,\ i+\kappa\in S} |g_i|^2 \right)^{1/2}\left(\sum_{i\in S,\ i+\kappa\in S} |g_{i+\kappa}|^2 \right)^{1/2},$$  where we applied the Cauchy-Schwartz inequality once more.
	This can happen if and only if $S\cap (\kappa+S)=S$, so $S$ must be $\kappa$-periodic. In particular $\kappa$ must divide $d$.
	
	By applying translation,  we may assume without loss of generality that $S$ is the subsets of multiples of $\kappa$, and for a given $k$ there exists an $\ell$ such that $a_{k,\ell}\neq 0$, if and only if $k=t\kappa$. Take $t=1$. By applying a phase quadratic transformation (type 4'), we may assume that $|a_{\kappa,0}|=1$. By Cauchy-Schwartz we conclude that the vectors $(g_i)$ and $g_{\kappa+i}$ are proportional, so $g_i=\alpha g_{\kappa+i}$ for all $i$, with $|\alpha|=1$. Modifying by a phase we may assume that $g_0=1/\sqrt{d/\kappa}$ hence $g_{\kappa j}=\alpha^j/\sqrt{d/\kappa}$. In particular for $j=d/\kappa$ we obtain $\alpha^{d/\kappa}=1$. Modifying by linear phase (type 4) we obtain $g=g(\kappa,d)/\sqrt{d/\kappa}$.
\end{proof}

\begin{remark} Choosing $r=d$, we recover part (iii) of Proposition~\ref{Prop:proertiesrankG}.
\end{remark}

\subsection{Examples of automorphisms of two types of Gabor systems}

The list of transformations 1--4' above gives rise to a notion of an automorphism group of a Gabor systems. We define the group $\mathcal G_d$ as the group of all maps $\C^d \to \C^d$ generated by the transformations 1--4'. For every unit vector $g$, let $\Aut(g)\subseteq \mathcal G_d$ be the subgroup fixing $g$. We call this the \emph{Automorphism} group of $g$. We can easily create vectors $g$  with nontrivial automorphisms. For example, if $g_{i+\delta}=g_i$ for all $i$ and $\delta | d$, then $g$ is a periodic vector having nontrivial translations as automorphisms. Likewise if $g_{mi}=g_i$ for all $i$, $\gcd(m,d)=1$, then $g$ has multiplicative automorphisms. A more interesting example will be a vector $g$, such that for all $i$, $g_{i+\delta}=g_i\omega^{\alpha\binom{i}{2}+\beta i+\gamma}$, for fixed $\alpha,\beta,\gamma,\delta\in \Z_d$. So any such $g$ has a nontrivial automorphism, which is a composition of phase-quadratic and a translation transformations. If $gcd(\delta,d)=1$, then by iterating this relation, it is easy to show that 
\begin{equation}\label{symm}
    g_i=\frac{\phi}{\sqrt{d}} \omega^{a\binom{i}{3}+b\binom{i}{2}+ci}, \ \ \ |\phi|=1,
\end{equation} 
where $a,b,c\in \Z_d$ solve uniquely the linear system
\begin{equation*}
\left\{\begin{array}{ll}
     &  a\delta=\alpha \\
     &  a\binom{\delta}{2}+b\delta=\beta\\
     & a\binom{\delta}{3}+b\binom{\delta}{2}+c\delta=\gamma.
\end{array}   \right.
\end{equation*}
    
Conversely, every vector of the form \eqref{symm} has this specific automorphism. A vector $g=(g_i)$ satisfying \eqref{symm} is what is known in the literature as an \emph{Alltop sequence}. Such sequences were constructed by Alltop \cite{1056185} for applications of spread spectrum radars and communication. 
By the transformation rule of a phase quadratic symmetries, we have that
$$ |a_{k,\ell}(g)|=|a_{k,\ell-\alpha k}(g)|,$$ which means that zero eigenvalues may be duplicated by the automorphism, giving some limitations on $\rank (G(g))$.

Assume now that $d$ is prime and $1\le a <d$. Then the above symmetry implies that $|a_{k,\ell}(g)|$ is independent of $\ell$. In fact we can show

\begin{proposition}(See \cite{6612625,inbook})
    If $1\le a <d$, $d$ is an odd prime, and $g$ is as in \eqref{symm}, then 
    $$|a_{k,\ell}(g)|^2=\begin{cases}
          \frac{1}{\sqrt{d}} & k>0\\
          0 & k=0, \ell>0
    \end{cases}. $$
    In particular $\rank (G(g))=d^2-d+1$. Moreover, the Gabor System $\{T^\ell M^kg\}_{\ell, k=0}^{d-1}$, together with the standard basis is a maximal MUB of $d+1$ bases.
\end{proposition}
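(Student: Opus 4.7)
The plan is to reduce $a_{k,\ell}(g)$ to a quadratic Gauss sum over $\Z_d$ and then invoke Theorem~\ref{thm:rankG}. Substituting $g_n=(\phi/\sqrt d)\,\omega^{a\binom{n}{3}+b\binom{n}{2}+cn}$ into $a_{k,\ell}(g)=\sum_n g_n\bar g_{n+k}\omega^{n\ell}$ and applying the Chu--Vandermonde identities
\[
\binom{n+k}{3}=\binom{n}{3}+k\binom{n}{2}+\binom{k}{2}n+\binom{k}{3},\qquad \binom{n+k}{2}=\binom{n}{2}+kn+\binom{k}{2},
\]
the cubic-in-$n$ contribution of the phase cancels inside $g_n\bar g_{n+k}$, and what remains is at most quadratic in $n$. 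Pulling the $n$-independent factors out of the sum gives
\[
a_{k,\ell}(g)=\tfrac{1}{d}\,\omega^{-a\binom{k}{3}-b\binom{k}{2}-ck}\sum_{n=0}^{d-1}\omega^{-ak\binom{n}{2}+(\ell-a\binom{k}{2}-bk)\,n}.
\]
Since $d$ is odd, I may write $\binom{n}{2}=(n^2-n)/2$ with $2\in\Z_d^\times$, so the inner sum has the form $\sum_n\omega^{An^2+Bn}$ with $A=-ak/2$ and an explicit $B\in\Z_d$.

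When $k=0$ the quadratic part vanishes and the inner sum collapses to $\sum_n\omega^{n\ell}=d\,\delta_{\ell,0}$, giving $a_{0,0}(g)=1$ and $a_{0,\ell}(g)=0$ for $\ell\neq 0$. When $k\neq 0$, the hypotheses $1\le a<d$ and $d$ prime force $A\in\Z_d^\times$, and completing the square yields
\[
\sum_n\omega^{An^2+Bn}=\omega^{-B^2/(4A)}\sum_m\omega^{Am^2}.
\]
The classical quadratic Gauss sum identity $\bigl|\sum_m\omega^{Am^2}\bigr|=\sqrt d$ for $A\in\Z_d^\times$ and $d$ an odd prime then gives $|a_{k,\ell}(g)|=1/\sqrt d$, i.e.\ $|a_{k,\ell}(g)|^2=1/d$ for every $k>0$ and every $\ell$. (The constant $1/\sqrt d$ printed on the right-hand side of the statement appears to be a typographical error for $1/d$.) The rank count is then immediate from Theorem~\ref{thm:rankG}: the nonzero entries of the multiset $\{|a_{k,\ell}(g)|^2\}_{k,\ell}$ number $1+d(d-1)=d^2-d+1$, so $\rank(G(g))=d^2-d+1$.

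For the MUB statement, I partition $\mathcal G(g)$ into the $d$ subsets $B_\ell=\{M^kT^\ell g:k\in\Z_d\}$. Because $|g_n|^2=1/d$, a direct computation gives $\langle M^kT^\ell g,M^{k'}T^\ell g\rangle=\tfrac{1}{d}\sum_n\omega^{(k-k')n}=\delta_{k,k'}$, so each $B_\ell$ is an orthonormal basis. For $\ell\neq \ell'$, shifting the summation variable reduces the cross inner product $\langle M^kT^\ell g,M^{k'}T^{\ell'}g\rangle$ to a unit-modulus phase times $a_{\ell-\ell',\,k-k'}(g)$, whose modulus equals $1/\sqrt d$ by the calculation above; and against the standard basis $|\langle e_n,M^kT^\ell g\rangle|=|g_{n-\ell}|=1/\sqrt d$. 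Thus the $d$ bases $B_\ell$ together with the standard basis form $d+1$ mutually unbiased bases, which is the maximal number in prime dimension $d$. The only genuinely nontrivial input to the whole argument is the Gauss-sum magnitude bound; primality of $d$ is used both to make $2$ invertible in $\Z_d$ and to ensure $A=-ak/2\in\Z_d^\times$ whenever $k\neq 0$ and $1\le a<d$.
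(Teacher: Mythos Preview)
Your proof is correct and follows essentially the same route as the paper: substitute the Alltop phase, use the binomial (Vandermonde) identities to cancel the cubic term, complete the square, and invoke the quadratic Gauss sum; then read off the rank from Theorem~\ref{thm:rankG} and deduce the MUB structure from the constant-modulus property of $g$. Your observation that the $1/\sqrt d$ in the statement should be $1/d$ is also correct---the paper's own computation produces $|a_{k,\ell}(g)|=1/\sqrt d$, not its square.
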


\begin{proof}
   We compute directly for $k\neq 0$:
   \begin{multline*}
       |a_{k,\ell}(g)|=\left|\sum_i g_i \bar{g}_{i+k} \omega^{i\ell}\right| =\left|\frac{|\phi|^2}{d}\sum_i \omega^{a\binom{i}{3}+b\binom{i}{2}+ci}\omega^{-a\binom{i+k}{3}+b\binom{i+k}{2}+c(i+k)}\omega^{i\ell}\right|\\ =\frac{1}{d}\left|\sum_i \omega^{i\ell-ak\binom{i}{2}-ai\binom{k}{2}-bki}\omega^{-a\binom{k}{3}-b\binom{k}{2}-ck}\right|
       =\frac{1}{d}\left|\sum_i\omega^{-ak\binom{i}{2}-ai\binom{k}{2}-bki+i\ell}\right|\\
       =\frac{1}{d}\left| \sum_i \omega^{-\frac{ak}{2}\left(i-\frac{1}{2}-\frac{1}{ak}(\ell+bk+a\binom{k}{2})  \right)^2}\right|
       =\frac{1}{d}\left|\sum_i \omega^{-\frac{ak}{2}i^2}\right|=\frac{1}{d}|\pm \sqrt{\pm d}|=\frac{1}{\sqrt{d}}
   \end{multline*}
   In this computation we have interpreted a fraction $m/n$ as the unique integer $f$ such that $m\equiv nf \mod d$. We have used the fact that for an odd prime number $d$, and $gcd(\delta,d)=1$, $\sum_i \omega^{\alpha i^2}=\pm \sqrt{\pm d}$. This is a variant of the well-known Gauss sums \cite[Theorem 1]{6612625}.

   When $k=0$, $a_{k,\ell}(g)=\frac{1}{d}\sum_i \omega^{i\ell}=0$ if $\ell\neq 0$. This in particular means that $g$ is orthogonal to $M^ig$ for all $i$, and moreover  $B_0:=\{M^jg\}_j$ is an orthonormal basis. It follows that for every $i$, $B_i=\{T^iM^jg\}_i$ is an orthonormal basis, and that the Gabor system is a MUB of $d$ bases.
   Since all entries of $\mathcal G(g)$ are of the same modulus, we can adjoin the standard basis to obtain a maximal MUB.
\end{proof}

We next  construct another family of vectors $g$ having a nontrivial automorphism group, and leading to a non complete Gabor POVM with few distinct inner products.

We consider the subspace of $\C^d$ given by 
\begin{equation}\label{maut}
  \mathcal V(a,b,c,\kappa)=\{ g\in \C^d:  g_{\kappa i}=\omega^{a\binom{i}{2}+bi+c}g_i\}
    \end{equation}
for some integers $a,b,c\in \Z_d$ and $\kappa\in \Z_d^\times$.\\

The group $(\mathbb Z_d)^\times$ acts on the set $\Z_d$ by multiplication. The subgroup $\br{\kappa}$ generated by $\kappa$ yields a disjoint decomposition of $\br{\kappa}$- orbits
$$ \Z_d=\bigsqcup_r Q_r.$$ 
For each $r$ fix a point $i_r\in Q_r$. Then the value $g_{i_r}$ determines uniquely the values of $g_i$ for all $i\in Q_r$. Namely, if $i=\kappa^m i_r$, then by iterating the condition in \eqref{maut} we get
\be\label{oo} g_i=\omega^{\sum_{j=0}^{m-1} a\binom{\kappa^ji_r}{2}+b\kappa^ji_r+c}g_{i_r}.\ee
Notice however, that this must apply to any index $s$ such that $\kappa^si_r\equiv i_r\mod d$. Hence if $g_{i_r}\neq 0$,  the following condition must hold
\be\label{orient}
\sum_{j\in Q_r} a\binom{j}{2}+bj+c \equiv 0 \mod d.
\ee
In this case, we say that $Q_r$ is an \emph{orientable} $\br{\kappa}$-orbit for the triple $(a,b,c)$. Otherwise it is \emph{non-orientable}. We have 

\begin{proposition}\label{lem:2} Using the above notations and definitions, the following statements hold.  \begin{itemize}
    \item[(a)] The dimension over $\mathbb C$ of the space $\mathcal V(a,b,c,\kappa)$ is the number of orientable $\br{\kappa}$-orbits for the triple $(a,b,c)$.
    \item[(b)]If $d$ is odd and $gcd(d,\kappa^2-1)=1$, then all the $\br{\kappa}$-orbits are orientable for $(a, b, c)$.
\end{itemize}
    
\end{proposition}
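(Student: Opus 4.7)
My plan is to realize $\mathcal V(a,b,c,\kappa)$ as a direct sum of one-dimensional contributions, one per orientable orbit. Fix a representative $i_r$ for each $\langle\kappa\rangle$-orbit $Q_r$ of size $m_r=|Q_r|$, and consider the linear map $\Phi:\mathcal V(a,b,c,\kappa)\to\bigoplus_r\mathbb C$ sending $g$ to $(g_{i_r})_r$. Iterating \eqref{maut} along the orbit determines $g_j$ for every $j\in Q_r$ as an explicit unit-modulus phase times $g_{i_r}$ (formula \eqref{oo}), so $\Phi$ is injective. The only obstruction to surjectivity is the closure condition: after $m_r$ iterations we return to $i_r$, forcing $\omega^{S_r}g_{i_r}=g_{i_r}$, where $S_r$ is the sum in \eqref{orient}. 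Thus $g_{i_r}$ is a free complex parameter when $Q_r$ is orientable and must vanish (killing $g$ on the whole orbit, by \eqref{oo}) otherwise, proving (a).

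For part (b) I would expand
$$S_r \;=\; a\sum_{j\in Q_r}\binom{j}{2}\;+\;b\sum_{j\in Q_r}j\;+\;c\,m_r,$$
using $\binom{j}{2}=j(j-1)/2$, which is meaningful modulo the odd integer $d$, and show the first two sums vanish modulo $d$. Writing $Q_r=\{\kappa^s i_r:0\le s<m_r\}$, a telescoping identity gives
$$(\kappa-1)\sum_{j\in Q_r}j \;=\; i_r(\kappa^{m_r}-1)\;\equiv\;0\pmod d,$$
because $\kappa^{m_r}i_r\equiv i_r$ by the very definition of $m_r$. The hypothesis $\gcd(d,\kappa^2-1)=1$ forces $\gcd(d,\kappa-1)=1$, so $\kappa-1$ is invertible and $\sum_{j\in Q_r}j\equiv 0\pmod d$.

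The same telescoping applied to squares yields $(\kappa^2-1)\sum_{j\in Q_r}j^2=i_r^2(\kappa^{2m_r}-1)$; factoring $\kappa^{2m_r}-1=(\kappa^{m_r}-1)(\kappa^{m_r}+1)$ and writing $i_r^2(\kappa^{m_r}-1)=i_r\cdot[i_r(\kappa^{m_r}-1)]\equiv 0\pmod d$ gives $(\kappa^2-1)\sum_{j\in Q_r}j^2\equiv 0\pmod d$. Invertibility of $\kappa^2-1$ modulo $d$ then yields $\sum j^2\equiv 0$, and since $d$ is odd, $2$ is invertible, so $\sum\binom{j}{2}=\tfrac12(\sum j^2-\sum j)\equiv 0\pmod d$. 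Consequently $S_r\equiv c\,m_r\pmod d$, which vanishes when $c=0$; in the general case, setting $i=0$ in \eqref{maut} already forces $c\equiv 0\pmod d$ whenever the orbit $\{0\}$ supports any nonzero coordinate of $g$, so under the natural non-degeneracy assumption the residual term drops out and every orbit is orientable.

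The main obstacle is the second-moment calculation. The first moment dies after one telescoping step, but killing the second moment requires the further observation that $i_r\cdot i_r(\kappa^{m_r}-1)\equiv 0$ combined with the factorization $\kappa^{2m_r}-1=(\kappa^{m_r}-1)(\kappa^{m_r}+1)$; this is exactly where the full hypothesis $\gcd(d,\kappa^2-1)=1$ (rather than only $\gcd(d,\kappa-1)=1$) becomes necessary. The oddness of $d$ enters separately, solely to make $\binom{j}{2}$ meaningful modulo $d$.
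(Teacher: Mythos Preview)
Your argument for (a) is the paper's: both set up the evaluation map $g\mapsto(g_{i_r})_r$ at chosen orbit representatives, use the iteration formula \eqref{oo} for injectivity, and identify the closure condition after $m_r$ steps as exactly \eqref{orient}.

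For (b) your telescoping computation of $\sum_{j\in Q_r}j$ and $\sum_{j\in Q_r}j^2$ is again what the paper does (the paper writes the same geometric sum $\sum_t(\kappa^t i_r)^2=i_r^2(\kappa^{2s}-1)/(\kappa^2-1)$ and cancels via the coprimality hypothesis; your version, which first factors out $i_r(\kappa^{m_r}-1)\equiv 0$, is in fact the cleaner justification). The genuine gap is your handling of the residual term $c\,m_r$. Orientability in \eqref{orient} is a condition on the orbit and the triple $(a,b,c)$ alone, with no vector $g$ in sight; you cannot impose $c\equiv 0$ by appealing to a hypothetical nonzero coordinate $g_0$. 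What you observed is merely that the singleton orbit $\{0\}$ has $S_{\{0\}}=c$, i.e.\ that $\{0\}$ is orientable iff $c\equiv 0\pmod d$ --- which shows the conclusion of (b) actually fails for $c\not\equiv 0$, not that it holds under some extra assumption. The paper's own proof asserts it suffices to check $\sum j\equiv\sum j^2\equiv 0$ and never revisits the $c\,|Q_r|$ contribution, so you have spotted a term the paper silently drops; your attempted ``non-degeneracy'' fix, however, is circular.
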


\begin{proof} Let $\mathcal Q$ be the set of all $\br{\kappa}$-orbits in $\Z_d$, and denote by $N$ the number of orientable $\br{\kappa}$-orbits.
\begin{enumerate}
    \item[(a)]   Pick an index $i_r\in Q_r$ for any orbit $Q_r\in \mathcal Q$.  We define a linear map $E:\mathcal V(a,b,c,\kappa)\to \C^{\mathcal Q}$ by sending $g$ to the vector $(g_{i_r})_r$. The map $E$ is injective, because by \eqref{oo} $g$ is determined uniquely by the collection $\{g_{i_r}\}$. Also, when \eqref{orient} is not satisfied, then $g_{i_r}=0$, so we conclude that $\dim \mathcal V(a,b,c,\kappa)=\dim Image(E)\le N$. To prove the equality, for every orientable orbit $Q_r$ we will construct a vector $g\in \mathcal V(a,b,c, \kappa)$ with $g_{i_r}=1$ and $g_{i_s}=0$ for all $s\neq i$. We define $g_i$ for $i\in Q_r$ by equation \eqref{oo} taken with $g_{i_r}=1$, and for $i\notin Q_r$ we set $g_i=0$ for. Then condition \eqref{orient} guarantees that $g$ is well-defined, regardless of the choice of a lifting $m\in \Z$, and it is easy to check now that the condition in \eqref{maut} is satisfied. Hence $g\in \mathcal V(a,b,c, \kappa)$, and we are done proving (a).
\item[(b)] We must check condition \eqref{orient}. Since $d$ is odd and hence $2$ is invertible modulo $d$, it is enough to prove that for each orbit $Q_r$, $\sum_{j\in Q_r} j^2 \equiv \sum_{j\in Q_r} j \equiv 0 \mod d$. Since $Q_r=\{\kappa^t i_r\ | \  0\le t<s\}$, where $\kappa^si_r=i_r$, then using $\gcd(d,\kappa^2-1)=1$:
$$\sum_{t=0}^{s-1} (\kappa^ti_r)^2=\frac{\kappa^{2s}-1}{\kappa^2-1}i_r^2=\frac{i_r-i_r}{\kappa^2-1}i_r=0,$$
where the quantities are considered as elements of $\Z_d$. A similar argument proves that $\sum_{j\in Q_r} j\equiv 0\mod d$.
\end{enumerate}
\end{proof}

Let $g$ be a vector satisfying \eqref{maut}. By using \eqref{mult1} and \eqref{phase1} we have $|a_{\kappa i,\kappa^{-1}\ell}|=|a_{i,\ell-ai}|$ for all $(i,\ell)$, or equivalently
\be\label{trans} |a_{i,\ell}|=|a_{\kappa^{-1} i,\kappa\ell-\kappa^{-1} ai}|.\ee
Assume now the conditions of Proposition~\ref{lem:2}. 
Our next goal is to estimate the number distinct angles associated with the Gabor system generated by a vector $g$ that satisfies \eqref{maut}. Towards this we transform coordinates on $(\Z_d)^2$ by $i'=i$ and $\ell'=\ell-2a\kappa\ell/(\kappa^2-1).$ Write 
$a'_{i',\ell'}(g)=a'_{i',\ell'}=a_{i,\ell}(g)$. Then \eqref{trans} transforms to the simpler form
\be\label{symm1} |a'_{i,\ell}|=|a'_{\kappa^{-1}i,\kappa \ell}|.\ee
Recall that we also have the conjugacy symmetry $|a_{i,\ell}|=|a_{-i,-l}|$. Equivalently, 
\be\label{symm2} |a'_{i,\ell}|=|a'_{-i,-l}|.
\ee

Let $B_\kappa$ be the subgroup of $\Z_d^\times$ generated by $\kappa$ and $-1$. Then \eqref{symm1}-\eqref{symm2} are equivalent to
\be\label{symm3} |a'_{i,\ell}|=|a'_{t^{-1}i,t \ell}|, \ \ \text{for all } t\in B_\kappa.
\ee
The group $B_\kappa$ satisfies $B_\kappa=\br{\kappa}$ or $[B_\kappa:\br{\kappa}]=2$.

\begin{theorem}
Suppose that $d$ is odd and that $gcd(\kappa^2-1,d)=1$. Then the Gabor system $\mathcal G(g)$ is $m$-angular where 
\be\label{orbits} m\le \sum_{d_1,d_2|d}\frac{gcd(\delta(d_1),\delta(d_2))}{r}\varphi\left(\frac{d}{d_1}\right)\varphi\left(\frac{d}{d_2}\right).\ee
Here $r=|B_\kappa|$, $\delta(d_i)$ is the order of the image of $B_\kappa$ in $\Z_{d_i}^\times$, and $\varphi$ is the Euler totient function.

\end{theorem}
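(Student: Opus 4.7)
The plan is to combine the symmetry in \eqref{symm3} with a stratified orbit count. Since \eqref{symm3} shows that $(i,\ell)\mapsto |a'_{i,\ell}|$ is constant on the orbits of the $B_\kappa$-action $t\cdot(i,\ell):=(t^{-1}i,t\ell)$ on $\Z_d\oplus \Z_d$, the integer $m$ is bounded above by the total number of such orbits. It therefore suffices to organize this orbit count into the sum appearing in the theorem.

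The natural way forward is to decompose $\Z_d\oplus\Z_d=\bigsqcup_{d_1,d_2\mid d}S_{d_1,d_2}$ with
$$S_{d_1,d_2}=\{(i,\ell)\in\Z_d\oplus\Z_d : \gcd(i,d)=d_1,\ \gcd(\ell,d)=d_2\}.$$
Each $S_{d_1,d_2}$ is $B_\kappa$-invariant of cardinality $\varphi(d/d_1)\varphi(d/d_2)$, which accounts for the Euler-totient factors in the bound. Parametrizing $i=d_1 i'$ and $\ell=d_2\ell'$ with $i'\in (\Z/(d/d_1))^\times$ and $\ell'\in(\Z/(d/d_2))^\times$, the $B_\kappa$-action factors through the homomorphism
$$\rho_{d_1,d_2}\colon B_\kappa\longrightarrow (\Z/(d/d_1))^\times\times (\Z/(d/d_2))^\times,\qquad t\longmapsto (t^{-1}\bmod d/d_1,\ t\bmod d/d_2),$$
so the stabilizer of every point of $S_{d_1,d_2}$ equals $\ker\rho_{d_1,d_2}$. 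The orbit-stabilizer theorem then yields exactly $\varphi(d/d_1)\varphi(d/d_2)\,|\ker\rho_{d_1,d_2}|/r$ orbits inside $S_{d_1,d_2}$, and summing over divisor pairs reduces the theorem to the single inequality $|\ker\rho_{d_1,d_2}|\le \gcd(\delta(d_1),\delta(d_2))$ for every pair $(d_1,d_2)$.

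I expect this last estimate to be the main obstacle. The natural approach is to use the Chinese Remainder Theorem to decompose $\Z_d^\times$ according to the prime factorization of $d$, and to exploit the hypothesis $\gcd(\kappa^2-1,d)=1$ (together with $d$ odd) to pin down how $B_\kappa$ sits inside this product. One then has to relate the image of $B_\kappa$ in the codomain of $\rho_{d_1,d_2}$ to its separate images in $(\Z/d_i)^\times$ of sizes $\delta(d_i)$, exploiting the duality between the pair $d_i$ and $d/d_i$: in the squarefree case a direct CRT pairing delivers the bound at once, whereas the general case requires a prime-by-prime refinement relying on the fact that no nontrivial power of $\kappa$ is $\pm 1$ modulo any prime divisor of $d$.
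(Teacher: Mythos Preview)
Your overall strategy coincides with the paper's: bound $m$ by the number of $B_\kappa$--orbits on $(\Z_d)^2$ under $t\cdot(i,\ell)=(t^{-1}i,t\ell)$, stratify by $(\gcd(i,d),\gcd(\ell,d))$, and use orbit--stabilizer on each stratum. The paper does \emph{not} isolate and prove your inequality $|\ker\rho_{d_1,d_2}|\le\gcd(\delta(d_1),\delta(d_2))$; it simply asserts that the right--hand side of \eqref{orbits} \emph{equals} the orbit count, via a direct stabilizer computation leading to ``$\mathrm{Fix}((i,\ell))=r/\gcd(\delta(d_1),\delta(d_2))$''.

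The genuine gap in your proposal is that the termwise inequality you flag as the ``main obstacle'' is \emph{false} in general, so no CRT/prime--by--prime argument of the kind you sketch can rescue it. Take $d=25$ and $\kappa=2$: then $d$ is odd, $\gcd(\kappa^2-1,d)=\gcd(3,25)=1$, and $B_\kappa=\langle 2\rangle=(\Z/25)^\times$ has order $r=20$. For $d_1=d_2=5$ one has $\ker\rho_{5,5}=\{t\in B_\kappa:t\equiv 1\pmod 5\}$, of size $5$, while $\delta(5)=4$ gives $\gcd(\delta(5),\delta(5))=4<5$. Concretely, the stratum $S_{5,5}$ carries $16\cdot 5/20=4$ orbits, strictly more than the corresponding term $\tfrac{4}{20}\cdot 16=3.2$ in \eqref{orbits}; so bounding $m$ only by the orbit count cannot yield the stated bound term by term. (The paper's computation runs into the very same point: the stabilizer of $i$ with $\gcd(i,d)=d_1$ is $\langle\kappa_0^{\delta(d/d_1)}\rangle$, not $\langle\kappa_0^{\delta(d_1)}\rangle$ as written, and with the correct stabilizer one obtains $|\mathrm{Stab}((i,\ell))|=r/\mathrm{lcm}(\delta(d/d_1),\delta(d/d_2))$ rather than $\gcd(\delta(d_1),\delta(d_2))$.)
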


\begin{proof}
    The number of angles in this Gabor system is the number of distinct $|a_{i,\ell}(g)|$, which is the number of distinct $|a'_{i,\ell}(g)|$. The group $B_\kappa$ acts on $(\Z_d)^2$ via $(i,\ell)\mapsto (t^{-1}i,t\ell)$ for all $t\in B_\kappa$, hence by \eqref{symm3} $|a'_{i,\ell}|$ are constant along the orbits. We will be done if we show that the right hand side of \eqref{orbits} is the number of $B_\kappa$-orbits.
    
   We note that if $C$ is a finite group acting on a finite set $S$, and $\text{Fix}(s)$ is the cardinality of the stabilizer of the point $s\in S$, then the number of orbits is $\sum_{s\in S} 1/\text{Fix}(s)$.  Let  $\kappa_0$ be a generator for the cyclic group $B_\kappa$, and consider a point $(i,\ell)\in (\Z_d)^2$. Let $d_1=gcd(i,d)$ and $d_2=gcd(\ell,d)$. Then $\kappa_0$ has multiplicative order $\delta(d_1)$ modulo $d_1$ and $\delta(d_2)$ modulo $d_2$. Thus $\br{\kappa_0^{\delta(d_1)}}$ is the stabilizer of $i$, and $\br{\kappa_0^{\delta(d_2)}}$ is the stabilizer of $\ell$. It follows that $\text{Fix}((i,\ell))=r/gcd(\delta(d_1),\delta(d_2))$. The number of the pairs $(i,\ell)$ with $gcd(i,d)=d_1$ and $gcd(\ell,d)=d_2$ is $\varphi(d/d_1)\varphi(d/d_2)$. The theorem follows.
\end{proof}

\begin{corollary}\label{cor:1} Suppose that $d$ is prime and $\kappa$ generates $(\Z_d)^\times$, or that $d\equiv 3 \mod 4$ and $\kappa$ has order $(d-1)/2$.  Then, we have 
$$\begin{cases}
m\le d+2\\
\rank(G(g))\equiv d^2 \mod(d-1).
\end{cases}
$$
\end{corollary}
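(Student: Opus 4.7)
The plan is to specialize the bound from the preceding theorem to the case of prime $d$ and then reuse the orbit count one more time to extract the rank congruence. The key point underlying both statements is that under either hypothesis $B_\kappa$ equals all of $\Z_d^\times$, so $r = d-1$.

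First I would check that $r = d-1$. Under the first hypothesis this is automatic since $\kappa$ generates $\Z_d^\times$. Under the second, $\langle \kappa \rangle$ has order $(d-1)/2$; since $d \equiv 3 \pmod 4$, this order is odd, so the order-two element $-1$ cannot lie in $\langle \kappa \rangle$. Hence $B_\kappa = \langle \kappa \rangle \sqcup (-\langle \kappa \rangle)$ has order $d-1$, and therefore equals $\Z_d^\times$. I would then substitute into the sum in the preceding theorem: for prime $d$ the pair $(d_1,d_2)$ ranges over $\{1,d\}^2$, and the parameters are $\delta(1)=1$, $\delta(d)=r=d-1$, $\varphi(d)=d-1$, $\varphi(1)=1$. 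The four contributions are $d-1,\,1,\,1,\,1$, summing to $d+2$, which yields the bound $m \le d+2$.

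For the rank congruence I would enumerate the orbits of $B_\kappa = \Z_d^\times$ on $(\Z_d)^2$ under the action $(i,\ell) \mapsto (t^{-1}i, t\ell)$. The origin is a singleton orbit; each punctured coordinate axis forms a single orbit of size $d-1$ (the action is simply transitive there); and the remaining $(\Z_d^\times)^2$ breaks, by the invariant $i\ell \in \Z_d^\times$, into $d-1$ orbits of size $d-1$ each. So every orbit other than the origin has size exactly $d-1$. By \eqref{symm3} the quantity $|a'_{i,\ell}(g)|$ is constant on each orbit, and since $(i,\ell) \leftrightarrow (i',\ell')$ is a bijection on $(\Z_d)^2$ preserving vanishing, the nonzero locus of $a_{i,\ell}(g)$ is a union of $B_\kappa$-orbits. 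The origin orbit always contributes $1$ since $a_{0,0}(g) = \|g\|^2 = 1$, while every other occupied orbit contributes $d-1$. By Theorem~\ref{thm:rankG}, $\rank(G(g))$ counts precisely these nonzero entries, so $\rank(G(g)) = 1 + k(d-1)$ for some $0 \le k \le d+1$, and hence $\rank(G(g)) \equiv 1 \equiv d^2 \pmod{d-1}$.

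There is no substantive obstacle in this corollary; the only mildly delicate step is the parity check that $-1 \notin \langle \kappa \rangle$ when $d \equiv 3 \pmod 4$, which rests on $(d-1)/2$ being odd.
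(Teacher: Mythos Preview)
Your proof is correct and follows essentially the same approach as the paper's own proof: identify $B_\kappa=\Z_d^\times$ in both cases, evaluate the orbit-count sum from the preceding theorem to get $d+2$, and deduce the rank congruence from the fact that all nontrivial $B_\kappa$-orbits on $(\Z_d)^2$ have size $d-1$. Your version supplies details the paper leaves implicit, in particular the parity argument showing $-1\notin\langle\kappa\rangle$ when $d\equiv 3\pmod 4$, and the explicit enumeration of the $d+2$ orbits via the invariant $i\ell$.
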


\begin{proof}
 In the both cases $B_\kappa=\Z_d^\times$ and $r=d-1$. The pair $(d_1,d_2)$ takes the values $(1,1),(d,1),(1,d),(d,d)$. We have $\delta(1)=1$ and $\delta(d)=d-1$. The right hand side of \eqref{orbits} is $d+2$.
 
 The second equation follows from the fact that all the non-trivial $B_\kappa$-orbits of $\Z_d^2$ are of order $d-1$.
\end{proof}

  We illustrate Corollary~\ref{cor:1} with an example, namely the family of Gabor frames generated by the Bj\"orck sequences.
  Suppose $d$ is an odd prime, and denote $\chi[k]\equiv \big( \frac{k}{d}\big)$ the Legndre symbol. We can define a vector in $\C^d$ accordingly as following:
\begin{itemize}
    \item When $d$ is prime and $d \equiv 1 \mod 4$, 
	\[g_k = \frac{1}{\sqrt{d}} e^{i\theta\chi[k]}, \textit{ where } \theta = \arccos\bigg(\frac{1}{1+\sqrt{d}}\bigg),\]
	for all $k\in \Z_d$.
	\item When $d$ is prime and $d \equiv 3 \mod 4$,
	\[
	g_k = 
	\begin{cases}
	\frac{1}{\sqrt{d}}e^{i\phi} & \textit{if } k \in \mathcal{Q}^C \subseteq (\Z_d)^{\times},\\
	\frac{1}{\sqrt{d}} & otherwise,
	\end{cases}
	\]	
	for all $k\in \Z_d$. Where $\phi=\arccos(\frac{1-d}{1+d})$, and $\mathcal{Q}^C$ is the preimage of $-1$ under $\chi$.
\end{itemize}

Bj\"orck sequences, which were constructed in \cite{MR899531} are CAZAC (Constant Amplitude Zero Auto Correlation) sequences, meaning that $\ip{g}{T^\ell g}=0$ for all $\ell \in \Z_d^\times$. For this reason, they provide examples of vectors $g\in \C^d$ with $\|g\|_0=d$ and do not generate IC-POVMs. In fact, the number of different values in the Gramian $G(g)$ is relatively small. When $d\equiv 3 \mod 4$, we know that $\rank(G(g))\leq d^2-2d+2$ by the result in \cite{MR2921081} and the following observation.

\begin{proposition}\label{Thm:NIBjorck}
Suppose $d$ is prime and $d\equiv 3 \mod 4$ and $g$ is a unit vector in $\C^d$, then $|\ip{g}{M^kT^\ell g}|$ take $d+1$ different values. Furthermore, $|\ip{g}{M^kT^\ell g}|=|\ip{g}{M^{k'}T^{\ell'}g}|$ if $k\ell\equiv k'\ell' \mod d$ and $(k,\ell),(k',\ell')\neq (0,0)$.
\end{proposition}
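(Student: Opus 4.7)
My approach leverages a quadratic-residue symmetry of the Björck vector when $d\equiv 3\pmod 4$, combined with its CAZAC property and the multiplicative transformation rule~\eqref{mult1}. The first observation is that for every quadratic residue $m\in \Z_d^\times$ one has $g_{mn}=g_n$ for all $n\in \Z_d$: the case $n=0$ is trivial, while for $n\neq 0$ one has $\chi[mn]=\chi[m]\chi[n]=\chi[n]$, so $mn$ and $n$ belong to the same Legendre class and $g_{mn}=g_n$ by definition of the Björck vector. Applying the transformation rule~\eqref{mult1} to $h=g$ yields $a_{k,\ell}(g)=a_{mk,\,m^{-1}\ell}(g)$ for every quadratic residue $m$. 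A direct computation from the definition of $a_{k,\ell}$ gives the further conjugation symmetry $|a_{k,\ell}(g)|=|a_{-k,-\ell}(g)|$, and since $d\equiv 3\pmod 4$ ensures that $-1$ is a non-residue, the quadratic residues together with $-1$ generate all of $\Z_d^\times$. Hence
\[
|a_{k,\ell}(g)|=|a_{mk,\,m^{-1}\ell}(g)| \qquad \text{for every } m\in \Z_d^\times.
\]

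I would then use this to prove the ``furthermore'' clause. The $\Z_d^\times$-action $(k,\ell)\mapsto (mk,m^{-1}\ell)$ on $\Z_d\times \Z_d$ preserves $k\ell\bmod d$, and conversely, whenever $k\ell\equiv k'\ell'\pmod d$ with $k,k'\neq 0$, the choice $m=k'k^{-1}$ sends $(k,\ell)$ to $(k',\ell')$; this handles every pair in which both components have a nonzero coordinate in common with a target. The remaining case is $k\ell\equiv 0$ with $(k,\ell)\neq (0,0)$, which forces $k=0$ or $\ell=0$ since $d$ is prime. If $k=0$ and $\ell\neq 0$, then $a_{0,\ell}(g)=\frac{1}{d}\sum_n \omega^{n\ell}=0$ because $|g_n|^2=1/d$ is constant; if $\ell=0$ and $k\neq 0$, then $a_{k,0}(g)=\sum_n g_n\bar g_{n+k}=\overline{\ip{g}{T^{-k}g}}=0$ by the Björck CAZAC property established in \cite{MR899531}. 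In either sub-case $|a_{k,\ell}(g)|=0$, which completes the invariance statement.

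The count of ``exactly $d+1$'' distinct values then follows once one verifies that the three families of values produced above are all pairwise distinct: the value $|a_{0,0}(g)|=1$, the common value $0$ on the $k\ell\equiv 0$, $(k,\ell)\neq(0,0)$ orbit, and the $d-1$ numbers $\alpha_j:=|a_{1,j}(g)|$ for $j\in \Z_d^\times$ (one for each nonzero product class). The upper bound $d+1$ is now immediate; the principal obstacle is the \emph{distinctness} of the $\alpha_j$ and their separation from $0$ and $1$. My plan for this is to expand
\[
a_{1,j}(g)=\frac{1}{d}\sum_n e^{i\phi(\mathbf{1}_{\mathcal Q^C}(n)-\mathbf{1}_{\mathcal Q^C}(n+1))}\omega^{nj},
\]
split the sum according to the four possible patterns of $(\chi[n],\chi[n+1])$, and thereby reduce the distinctness question to the non-vanishing and mutual distinctness of Jacobi-type character sums of the form $\sum_n \chi[n]\chi[n+1]\omega^{nj}$. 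These can be controlled via classical Gauss-sum evaluations (in particular the fact that the Gauss sum $\sum_n \chi[n]\omega^{nj}$ has modulus $\sqrt d$ and, for $d\equiv 3\pmod 4$, is purely imaginary up to a twist), which one combines with an orthogonality argument on the $d-1$ exponentials $\{\omega^{nj}\}_{j\in \Z_d^\times}$ to separate the $\alpha_j$. This final number-theoretic step is the most delicate part of the argument.
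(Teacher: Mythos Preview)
Your argument for the ``furthermore'' clause and for the upper bound of $d+1$ values is correct and is essentially the paper's own route: the paper observes that the Bj\"orck vector lies in the symmetry class $\mathcal V(0,0,0,\kappa)$ for a quadratic residue $\kappa$ of order $(d-1)/2$, invokes Corollary~\ref{cor:1} to get the $\Z_d^\times$-invariance $|a_{k,\ell}|=|a_{t^{-1}k,\,t\ell}|$ (hence at most $d+2$ angles), and then notes that the constant-amplitude and CAZAC properties force the two axis orbits to collapse to the single value $0$, yielding at most $d+1$. You reach the identical $\Z_d^\times$-action by combining the quadratic-residue fixed-point property with the conjugation symmetry directly, which is a clean shortcut but not a different idea.

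Where both you and the paper stop short is the lower bound, i.e.\ the pairwise distinctness of the $d-1$ ``hyperbola'' values $\alpha_j=|a_{1,j}(g)|$ for $j\in\Z_d^\times$ and their separation from $0$ and $1$. The paper does not prove this at all: Proposition~\ref{Thm:NIBjorck} is stated without proof, and the surrounding text only explains how the \emph{upper} bound fits into the paper's framework. Your Jacobi-sum sketch is therefore already more than the paper offers, but as you say it is the delicate step and it is not carried out; in particular, writing $a_{1,j}$ as a combination of exponentials indexed by the four $(\chi[n],\chi[n+1])$ patterns and appealing to the modulus-$\sqrt d$ Gauss-sum identity does not by itself separate the $\alpha_j$, since distinct $j$ can in principle produce the same modulus after the recombination. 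A complete argument would need an explicit closed form for $|a_{1,j}(g)|^2$ as a function of $j$ (or an injectivity argument for $j\mapsto |a_{1,j}|$ on $\Z_d^\times$), which neither your proposal nor the paper supplies.
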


Note that by Corollary~\ref{cor:1}, the upper bound of the number of angles is $d+2$. Under the specific choice of of the phase $\phi$, the values of $a_{0,\ell}$ and $a_{\ell,0}$, $\ell\neq 0$ all degenerate to $0$. General choices of $\phi$ will result in $d+2$ angles.

\begin{remark}
We provide an example illustrating Proposition~\ref{Thm:NIBjorck} with the Bj\"orck sequence of length $d=7$ is
\[g_k=
\begin{cases}
\frac{3}{4\sqrt{7}}+i\frac{1}{4} &k=3,5,6.\\
\frac{1}{\sqrt{7}} & k=0,1,2,4.
\end{cases}
\]

Figure~\ref{fig:Bjorckp7} shows the different values of $|\ip{g}{M^kT^\ell g}|$.

\begin{figure}[h]
\includegraphics[width=8cm]{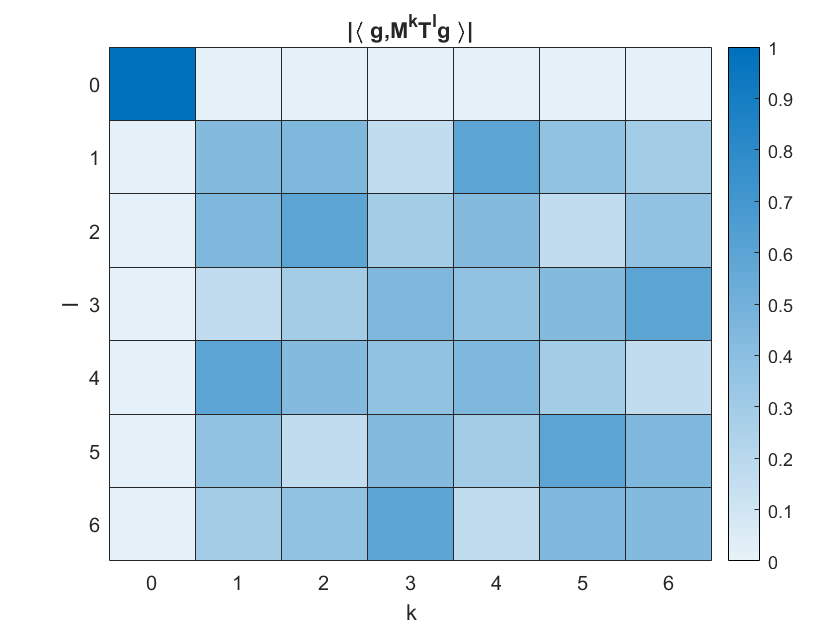}
\caption{Heatmap of $|\ip{g}{M^kT^\ell g}|$, where $g$ is the Bj\"orck sequence of length 7.}
\label{fig:Bjorckp7}
\end{figure}

\end{remark}



We conclude this section by showing that 
$\rank(G(g))$ cannot lie in the interval $(d, 2d)$ when $d$ is odd and prime. We contrast this with Proposition \ref{Prop:proertiesrankG} where we gave an example of $g$ leading to $\rank(G(g))= 3d/2$  for $d$ even.  But first, we need the following Lemma. 


\begin{lemma} \label{lem:supp}
	Assume that $d$ is an odd prime.
	If $\|g\|_0>d/2$ and $\rank(G(g))\neq d$, then $\rank(G(g))\ge 2d$.
\end{lemma}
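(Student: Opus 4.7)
The plan is to exploit Theorem~\ref{thm:rankG} to rewrite
$$\rank(G(g)) = \sum_{k=0}^{d-1}\|\hat w_k\|_0, \qquad w_k:=(g_i\bar g_{i+k})_{i\in\Z_d},$$
so that $a_{k,\ell}(g)=\hat w_k(\ell)$. The hypothesis $\|g\|_0>d/2$ gives $|\supp(g)\cap(\supp(g)-k)|\ge 2\|g\|_0-d>0$ for every $k$, hence $w_k\neq 0$, $\|\hat w_k\|_0\ge 1$, and therefore $\rank(G(g))\ge d$ to begin with. I will argue by contradiction, assuming $d<\rank(G(g))<2d$, and set $A:=\{k\in\Z_d:\|\hat w_k\|_0=1\}$. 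The bound $\rank(G(g))\ge |A|+2(d-|A|)=2d-|A|$ forces $|A|\ge 1$.

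The first key step is to show that the existence of a single $k^*\in A$ already implies $|g_i|\equiv 1/\sqrt{d}$. Applying Chebotarev's uncertainty principle $\|w_{k^*}\|_0+\|\hat w_{k^*}\|_0\ge d+1$ (valid because $d$ is prime) to the nonzero vector $w_{k^*}$, one gets $\|w_{k^*}\|_0=d$, so $g_i\neq 0$ for every $i$. From $\|\hat w_{k^*}\|_0=1$ it follows that $w_{k^*}(i)=c\,\omega^{-i\ell^*}$, so $x_i\,x_{i+k^*}=|c|^2$ where $x_i=|g_i|^2$; this iterates to $x_i=x_{i+2k^*}$, and since $\gcd(2k^*,d)=1$ (the $k^*=0$ case being immediate because $w_0$ is real and nonnegative), we conclude $|g_i|=1/\sqrt{d}$ for all $i$.

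Once $|g_i|$ is constant, every $|w_k(i)|=1/d$ is independent of $i$. This rules out $\|\hat w_k\|_0=2$: a two-frequency sum $d^{-1}(\alpha\,\omega^{-i\ell_1}+\beta\,\omega^{-i\ell_2})$ with $\alpha,\beta\neq 0$ and $\ell_1\neq \ell_2$ has squared modulus containing a nonconstant cosine, contradicting constant modulus. Hence $\|\hat w_k\|_0\in\{1\}\cup\{3,4,\ldots\}$ for every $k$, which upgrades the bound to $\rank(G(g))\ge |A|+3(d-|A|)=3d-2|A|$; together with $\rank(G(g))<2d$ this yields $|A|>d/2$, so $|A|\ge 2$.

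The endgame is to verify that $A$ is a subgroup of $\Z_d$: $0\in A$ because $|g_i|^2$ is constant; $-k\in A$ follows from $w_{-k}(i)=\overline{w_k(i-k)}$; and closure under addition follows from the identity $w_{k_1+k_2}(i)=w_{k_1}(i)\,w_{k_2}(i+k_1)/|g_{i+k_1}|^2$, which (since no $g_j$ vanishes) shows that $w_{k_1+k_2}$ is again a single character. Because $d$ is prime and $|A|\ge 2$, we must have $A=\Z_d$, forcing $\rank(G(g))=d$ and contradicting $\rank(G(g))>d$. The hardest part of the argument is the constant-modulus reduction: extracting $|g_i|\equiv 1/\sqrt{d}$ from a single $k^*\in A$ is where primality of $d$ is used essentially, both through Chebotarev and through $\gcd(2k^*,d)=1$; the rest is combinatorial bookkeeping on the multiset $\{\|\hat w_k\|_0\}_k$.
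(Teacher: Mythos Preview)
Your proof is correct. Both your argument and the paper's begin identically: $\|g\|_0>d/2$ forces every $w_k\neq 0$, the contradiction hypothesis produces some $k^*$ with $\|\hat w_{k^*}\|_0=1$, and from the single-character form of $w_{k^*}$ one extracts $|g_i|\equiv 1/\sqrt d$. (Your appeal to Chebotarev here is more than you need: a nonzero scalar multiple of a character automatically has full support.)

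Where the two arguments diverge is in the endgame. After establishing constant modulus, the paper picks a single $k'\neq 0$ with $\|\hat w_{k'}\|_0\le 2$, shows that the constant-modulus constraint forces $\|\hat w_{k'}\|_0=1$, then uses the multiplicative and phase-quadratic transformations of Section~\ref{sect:rankinvariance} to normalize to $k'=1$, $\ell=0$, solve the recursion $g_i\bar g_{i+1}=c$ explicitly, and recognize $g$ as a phase-quadratic image of $g(1,d)$, invoking Theorem~\ref{thm:rankd} to conclude $\rank(G(g))=d$. You instead prove $\|\hat w_k\|_0\neq 2$ uniformly in $k$ (same obstruction), upgrade the counting to force $|A|>d/2$, and finish with the observation that the identity $w_{k_1+k_2}(i)=d\,w_{k_1}(i)\,w_{k_2}(i+k_1)$ makes $A$ a subgroup of $\Z_d$, hence $A=\Z_d$. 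Your route is more self-contained---no case split on $k^*=0$ versus $k^*\neq 0$, no symmetry transformations, no forward reference to Theorem~\ref{thm:rankd}---while the paper's route has the side benefit of actually identifying $g$ up to the symmetries of Section~\ref{sect:rankinvariance}.
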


\begin{proof}
	The assumption $\|g\|_0>d/2$ implies that the vector $w_k:=(g_i\bar g_{i+k})_i$ is not the zero vector for all $k$. Hence for each $k$, $a_{k,\ell}\neq 0$ for at least one $\ell$. Suppose by contradiction that $\rank(G(g))<2d$. Therefore, we must have some value of $k$ for which $a_{k,\ell}\neq 0$ for exactly one $\ell$. In particular $w_k$ is proportional to $(1,\omega^\ell,\omega^{2\ell},\ldots,\omega^{(d-1)\ell})$. There are two cases to consider:
	
	\begin{itemize}
		\item[Case I:] $k=0$. Then $w_0=(1,1,\ldots,1)/\sqrt{d}$. There must be some $k'\neq 0$ for which $a_{k',\ell}\neq 0$ for at most two values of $\ell$. Thus $w_{k'}$ equals to a vector $(a\omega^{im}+b\omega^{in})_i$, for some constants $a,b$. But as $|(w_{k'})_i|=1/\sqrt{d}$ for all $i$, $|a\omega^{im}+b\omega^{in}|$ is independent of $i$, which implies $a=0$ or $b=0$. WLOG $b=0$. Thus $g_i/ g_{i+k'}=da\omega^{im}$. Modifying $g$ by multiplicative transformation, and using $gcd(k',d)=1$, we may assume that  $k'=1$, which implies that $g_i=g_0(da)^i\omega^{mi(i-1)/2}$. Substituting $i=d$ we obtain $(da)^d=1$, hence $da=\omega^r$ for some $r$. Thus $g$ is a quadratic transformation of $(1,1,\ldots,1)/\sqrt{d}$ and $\rank(G(g))=d$. A contradiction.
		\item[Case II:] $k\neq 0$. Since $gcd(k,d)=1$, then by modifying $g$ be a multiplicative transformation we may assume that $k=1$, and $g_i\bar g_{i+1}=c\omega^{\ell i}$.
		By a phase quadratic transformation, we may reduce to the case $\ell=0$, so $g_i\bar g_{i+1}=c$ for all $i$. Dividing this by $\bar g_{i+1}g_{i+2}=\bar c$ we obtain $g_i/g_{i+2}=c/\bar c$. This implies that $|g_i|=|g_{i+2}|$ for all $i$ and as $d$ is odd, $|g_i|=1/\sqrt{d}$ for all $i$ and from this point the proof is identical to case I.
		 \end{itemize}
\end{proof}

\begin{remark}
It follows that for $d=3$ there is no $g$ with $\rank(G(g))=5$. If this was not the case, the generator  $g$  would satisfy $\|g\|_0=1$, but this in turn implies $\rank(G(g))=3$, a contradiction.
\end{remark}
We need introduce a definition and a preliminary result that is interesting in its own right.

\begin{defn}
    The \emph{density} $\delta=\delta(f)$ of a polynomial $f(X)\in F[X]$ over a field $F$ is the number of nonzero coefficients in $f$. 
\end{defn}
We denote $\mu_d$ the group of complex roots of unity of order $d$. The main part of the proof lies in the following result.   
\begin{proposition}
    Let $d$ be a prime integer and $f(X)\in \C[X]$ a polynomial of degree less than $d$. Then the number of roots of $f$ which are in $\mu_d$ is at most $\delta(f)-1$.
\end{proposition}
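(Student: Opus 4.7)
The plan is to argue by contrapositive: set $t:=\delta(f)$ and suppose for contradiction that $f$ vanishes on at least $t$ distinct elements of $\mu_d$; I will conclude that every coefficient of $f$ must be zero, contradicting $t\ge 1$. Write $f(X)=\sum_{m\in T}c_m X^m$, where $T\subseteq\{0,1,\ldots,d-1\}$ is the set of exponents on which $c_m\neq 0$, so $|T|=t$. Pick any $t$ distinct roots $\omega^{s_1},\ldots,\omega^{s_t}\in\mu_d$ of $f$, with $\omega=e^{2\pi i/d}$, and set $S=\{s_1,\ldots,s_t\}\subseteq\{0,1,\ldots,d-1\}$.

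The vanishing relations $f(\omega^{s_j})=0$ for $j=1,\ldots,t$ bundle together into a single linear system
\[
V\mathbf{c}=\mathbf{0},\qquad V=(\omega^{sm})_{s\in S,\,m\in T},\qquad \mathbf{c}=(c_m)_{m\in T},
\]
where $V$ is a $t\times t$ matrix and $\mathbf{c}$ is a nonzero vector. Hence $V$ must be singular.

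The key step is then to invoke Chebotarev's classical theorem on roots of unity, which asserts that when $d$ is prime, every square submatrix of the $d\times d$ DFT matrix $F$ is nonsingular. Since, up to the global normalization $\sqrt{d}$, the matrix $V$ is precisely the submatrix of $F$ with rows indexed by $S$ and columns indexed by $T$, Chebotarev forces $V$ to be invertible, contradicting the previous paragraph. This delivers the bound $\delta(f)-1$ on the number of roots of $f$ in $\mu_d$.

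The main obstacle is the appeal to Chebotarev's theorem itself, and this is precisely where primality of $d$ is indispensable: for composite $d$ one already finds nontrivial vanishing square minors of $F$ (already at $d=4$), and the bound $\delta(f)-1$ genuinely fails. Rather than reproving Chebotarev I would cite it; alternatively one could appeal to Tao's equivalent uncertainty principle $|\supp(g)|+|\supp(\widehat{g})|\ge d+1$ (valid for prime $d$) applied to the indicator vector of $S$ to obtain the same conclusion.
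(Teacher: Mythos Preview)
Your argument is correct, and in fact it is precisely the one-line route the paper itself acknowledges at the outset: the result is ``an immediate consequence of the fact that all minors of the $d\times d$ DFT matrix are nonzero when $d$ is prime'' (Chebotarev), which is exactly your matrix $V$ argument.

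The paper, however, then chooses to supply a different, self-contained proof via algebraic number theory rather than citing Chebotarev. It passes to a minimal-density polynomial $h$ with the prescribed roots, shows one may take $h\in\Z[\omega][X]$, and reduces modulo the unique prime $\mathfrak D=(\omega-1)$ above $d$ to obtain $\phi(h)\in\Z_d[X]$ divisible by $(X-1)^{\delta(f)}$; the vanishing of the first $\delta(h)$ derivatives of $\phi(h)$ at $X=1$ then contradicts the nonvanishing of a Vandermonde determinant over $\Z_d$. Your approach is shorter and directly invokes the known Chebotarev/Tao result, while the paper's argument trades brevity for being essentially self-contained, effectively reproving the needed special case of Chebotarev via reduction mod $d$.
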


\begin{proof}
    The result is an immediate consequence of the fact that all minors of $d\times d$  DFT matrix are nonzero when $d$ is prime, see \cite[Theorem 6]{10.2307/2041358} or \cite[Theorem 4]{DELVAUX20081587}. Nonetheless, we provide here an algebraic number field argument.

    Suppose, by contradiction, that the polynomial $f$ has at least $\delta(f)$ roots in $\mu_d$. Pick $\delta(f)$ roots, $\alpha_i=\omega^{e_i}$, $0\le i< \delta(f)$. Let $h=\sum_{j=0}^{\delta(h)-1} c_jX^{h_j}$ be the polynomial of degree less than $d$, with the smallest possible density  ($\le \delta(f)$) having all the $\alpha_i$ as roots.

    Then the coefficients of $h$ are solution to a linear system of equations given by the vectors $v_j:=(\omega^{e_ih_j})_i\in \mu_d^{\delta(f)}$, $0\le j\le \delta(h)-1$. By the assumption on $h$, this is the minimal linear dependency. The rank of the matrix $V=(\omega^{e_ih_j})_{i,j}$ is $\delta(h)-1$, thus there are $\delta(h)-1$ independent rows, and the linear dependency coefficients can be read from the $\delta(h)-1$ size minors belonging to the submatrix corresponding to these rows. In particular the coefficients belong to the cyclotomic field $K:=\mathbb Q(\omega)$ and $h$ is proportional to a polynomial in $K[X]$. WLOG we assume that $h\in K[X]$.\\
    
    The maximal order of $K$ is known to be the ring $\Z[\omega]$, and by again rescaling $h$ we may assume that $h\in \Z[\omega][X]$. There is a ring homomorphism $\phi:\Z[\omega]\to \Z_d$. The kernel of $\phi$ is the unique prime ideal $\mathfrak D$ above $d$, which is known to be principal, and generated by $\omega-1$. We extend $\phi$ to a ring homomorphism (denoted again by $\phi$) $\phi:\Z[\omega][X] \to \Z_d[X]$. Let $v_\mathfrak D(z)$ denote the $\mathfrak D$-adic valuation of $z\in K$, and let $v_\mathfrak D(h)=\min_i v_\mathfrak D(c_i)$. Then replacing $h$ by $h/(\omega-1)^{v_\mathfrak D(h)}$, we still have $h\in \Z[\omega][X]$, and at least on $c_i\notin \mathfrak D$. In particular $\phi(h)\neq 0$.\\
    
    Now, $h(X)=h_0(X)\prod_i(X-\alpha_i)$, and still $h_0(X)\in \Z[\omega][X]$. This can be proved by induction by dividing $g(X)$ successively by each $(X-\alpha_i)$. For example, in the first step write $h(X)=h(X-\alpha_0+\alpha_0)$ and expand each monomial around $X-\alpha_0$ using the binomial formula.
    
    On applying $\phi$ we obtain
    $$\phi(h)(X)=(X-1)^{\delta(f)}\phi(h_0)(X).$$ In particular, all first $\delta(h)\le \delta(f)$ derivatives of $\phi(h)$ vanish at $X=1$:
    $$ \phi(h)^{(k)}(X=1)=\sum_i c_i h_i(h_i-1)\cdots (h_i-k+1)=0 \mod d, \ \ \forall k<\delta(h).$$
    Hence, since not all $c_i$ are $0\mod d$, the matrix $W=\big(h_i(h_i-1)\cdots (h_i-k+1)\big)_{i,k} \in (\Z_d)^{\delta(h),\delta(h)}$ has linearly dependent rows. Notice that the $k$th column of $W$ are the values of polynomial $F_k(X)=X(X-1)\cdots(X-k+1)$ substituted at $X=h_i$. Thus by performing column elementary operations on $W$, we may clear the lower terms in $F_k(X)$, and our matrix is Gauss equivalent to the matrix $U=(h_i^k)_{i,k}$. But $0\le h_i<d$ are distinct and $U$ is the Vandermonde matrix in the field $\Z_d$, hence $\det(W)=\det(U)\neq 0$ in $\Z_d$. This is a contradiction, and the theorem is proved.
\end{proof}

We are now ready to prove:

\begin{theorem}
    For an odd prime $d$, there is no unit vector $g$ with $d<\rank(G(g))<2d$.
\end{theorem}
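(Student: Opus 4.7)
The plan is to argue by contradiction, supposing $d<\rank(G(g))<2d$, and splitting into cases according to $s:=\|g\|_0$. The two easy ends are handled by earlier results, and the middle range is where the newly-proved Proposition on roots of polynomials in $\mu_d$ does the work.

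If $s>d/2$, Lemma~\ref{lem:supp} immediately forces $\rank(G(g))=d$ or $\rank(G(g))\geq 2d$, both ruled out. If $s=1$, Proposition~\ref{Prop:proertiesrankG}(iii) gives $\rank(G(g))=d$, again a contradiction. So the analysis reduces to $2\leq s\leq (d-1)/2$ (which is empty for $d=3$ and therefore already finishes that case).

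For the remaining range, let $S=\supp(g)$ and, for each $k\in\Z_d$, introduce the polynomial
\[
f_k(X)=\sum_{n} g_n\bar g_{n+k}X^n\in\C[X],
\]
of degree less than $d$. Then $a_{k,\ell}(g)=f_k(\omega^\ell)$, and $\delta(f_k)=\|w_k\|_0$ where $w_k=(g_n\bar g_{n+k})_n$; moreover $w_k\neq 0$ iff $k\in K:=S-S$. By Theorem~\ref{thm:rankG}, $\rank(G(g))$ equals the number of $(k,\ell)$ with $f_k(\omega^\ell)\neq 0$. Applying the just-proved Proposition (which is exactly the hypothesis $d$ prime and $\deg f_k<d$), each nonzero $f_k$ has at most $\|w_k\|_0-1$ roots among $d$-th roots of unity, so the number of $\ell$ with $a_{k,\ell}\neq 0$ is at least $d-\|w_k\|_0+1$. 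Summing over $k\in K$ gives
\[
\rank(G(g))\;\geq\;\sum_{k\in K}\bigl(d-\|w_k\|_0+1\bigr)\;=\;|K|(d+1)-\sum_{k\in K}\|w_k\|_0.
\]

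The identity $\sum_{k}\|w_k\|_0=s^2$ follows by switching the order of summation (each $k$ counting pairs $(n,n+k)\in S\times S$), and the Cauchy--Davenport inequality, valid because $d$ is prime, gives $|K|=|S-S|\geq\min(d,2s-1)=2s-1$. Combining,
\[
\rank(G(g))\;\geq\;(2s-1)(d+1)-s^2.
\]
A one-variable analysis of the concave quadratic $\varphi(s):=(2s-1)(d+1)-s^2$ shows $\varphi(2)=3d-1\geq 2d$ and $\varphi$ is increasing on $[2,d+1]$; in particular $\varphi(s)\geq 2d$ throughout $2\leq s\leq(d-1)/2$, contradicting the assumption $\rank(G(g))<2d$. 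The main obstacle was the arithmetic fact that every polynomial of degree $<d$ has at most $\delta(f)-1$ roots in $\mu_d$ when $d$ is prime, which has already been proved; the rest is bookkeeping plus the classical sumset estimate.
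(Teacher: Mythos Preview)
Your proof is correct and uses the same essential ingredients as the paper: Lemma~\ref{lem:supp} to dispose of $s>d/2$, the Proposition on roots in $\mu_d$ to lower-bound the contribution of each nonzero $w_k$, and a Cauchy--Davenport estimate on $|S-S|$. The paper's endgame is organized a little differently: it uses the cruder uniform bound $\|w_k\|_0\le(d-1)/2$ to get at least $(d+3)/2$ nonzero $a_{k,\ell}$ per nonzero $w_k$, concludes there are at most three such $k$, infers $s\le2$, and then invokes Proposition~\ref{Prop:proertiesrankG}(iv). Your use of the identity $\sum_k\|w_k\|_0=s^2$ yields the direct bound $\rank(G(g))\ge(2s-1)(d+1)-s^2\ge 3d-1$ for all $s\ge2$, which is a bit cleaner since it avoids that final appeal to the $s=2$ classification.
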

\begin{proof}
   
    Suppose that there is such $g$.
    We know by Lemma \ref{lem:supp} that $\|g\|_0<d/2$. Then for each $k$,  $\|w_k\|_0<d/2$. Suppose that $k$ is taken such that $w_k\neq 0$. By the above theorem, the DFT $\widehat{w_k}$ can have  at most $(d-3)/2$ zero entries, hence the number of $\ell$ such that $a_{k,\ell}(g)\neq 0$ is at least $(d+3)/2$. This implies that there can be at most $3$ values of $k$ such that $w_k\neq 0$. This implies in turn that $\|g\|_0\le 2$. But in this case the conclusion of the theorem follows from Proposition \ref{Prop:proertiesrankG}.  
\end{proof}


We conclude the paper by proving the rank of Gabor POVMs in dimensions $4$ and $5$ when the generator $g$ does not have full support. 


\begin{proposition}\label{example:dim45}
Suppose that $g$ is a unit-norm vector. The following statements hold.
\begin{enumerate}
    \item[(i)] If $g\in \C^4$, then 
\[ \rank(G(g))=\begin{cases} 
      4 &   \text{if} \, \, \,  \|g\|_0=1 \\
      4,6,8,11,\, \text{or}\, 12 &  \text{if} \, \, \, \|g\|_0=2\\
      11, 12, 13, 14, 15, \, \text{or}\,   16 &  \text{if} \, \, \, \|g\|_0=3
   \end{cases}
\]
\item[(ii)] If $g\in \C^5$, then 
    \[\rank(G(g))=\begin{cases}
        5 &  \text{if} \, \, \,  \|g\|_0=1\\
        15 &  \text{if} \, \, \,  \|g\|_0=2\\
        21, 23, \, \text{or}\, 25 &  \text{if} \, \, \,  \|g\|_0=3
    \end{cases}
    \]

\end{enumerate}
\end{proposition}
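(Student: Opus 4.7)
The plan is to apply Theorem \ref{thm:rankG} together with Theorem \ref{thm:Geigenvalue}(ii): the rank of $G(g)$ equals the number of pairs $(k,\ell)\in\Z_d\times\Z_d$ for which $a_{k,\ell}=\sum_n g_n\overline{g_{n+k}}\omega^{n\ell}\neq 0$. Writing $w_k=(g_i\overline{g_{i+k}})_{i=0}^{d-1}$ we have
\[
\rank(G(g))\;=\;\sum_{k=0}^{d-1}\|\widehat{w_k}\|_0,
\]
and throughout I use the symmetry $\|\widehat{w_k}\|_0=\|\widehat{w_{-k}}\|_0$ inherited from $|\ip{g}{M^\ell T^k g}|=|\ip{g}{M^{-\ell}T^{-k} g}|$. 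The case $\|g\|_0=1$ is Proposition \ref{Prop:proertiesrankG}(iii), and the case $\|g\|_0=2$ follows from Proposition \ref{Prop:proertiesrankG}(iv) (which produces the five values $\{4,6,8,11,12\}$ for $d=4$ and only $3d=15$ for $d=5$ odd). The real content is the case $\|g\|_0=3$.

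For $\|g\|_0=3$ I first reduce by applying transformations $2$ and $3$ of Section \ref{sect:rankinvariance} (additive translation and multiplication by a unit), both of which preserve $\rank(G(g))$. A short orbit count under this combined action shows that every $3$-subset of $\Z_5$ is equivalent to $S=\{0,1,2\}$, and every $3$-subset of $\Z_4$ is equivalent either to $\{0,1,2\}$ or to $\{0,1,3\}$. For each canonical $S$ I compute $w_k$ directly and then analyze $\|\widehat{w_k}\|_0$ via the following bookkeeping: a $1$-sparse $w_k$ has constant-modulus DFT, so $\|\widehat{w_k}\|_0=d$; a $2$-sparse $w_k$ gives a binomial $A+B\omega^{mb}$ with at most one zero, so $\|\widehat{w_k}\|_0\in\{d-1,d\}$; and $\widehat{w_0}$ is the DFT of a vector of positive real entries, so $\widehat{w_0}[0]\neq 0$ and all other zeros of $\widehat{w_0}$ come in complex-conjugate pairs.

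Carrying this out for $d=5$, $S=\{0,1,2\}$ yields $\|w_0\|_0=3$, $\|w_1\|_0=\|w_4\|_0=2$, $\|w_2\|_0=\|w_3\|_0=1$; hence $\|\widehat{w_2}\|_0=\|\widehat{w_3}\|_0=5$, and by the conjugacy symmetry $\|\widehat{w_4}\|_0=\|\widehat{w_1}\|_0\in\{4,5\}$. The polynomial $P(X)=|g_0|^2+|g_1|^2X+|g_2|^2X^2$ has positive real coefficients with $P(1)\neq 0$, and its roots in $\mu_5\setminus\{1\}$ come in conjugate pairs, forcing $\|\widehat{w_0}\|_0\in\{3,5\}$. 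Summing across $k=0,1,2,3,4$ gives $\rank(G(g))\in\{21,23,25\}$. A parallel analysis for $d=4$, $S=\{0,1,2\}$ yields $\|\widehat{w_2}\|_0=4$, $\|\widehat{w_1}\|_0=\|\widehat{w_3}\|_0\in\{3,4\}$ and $\|\widehat{w_0}\|_0\in\{3,4\}$ (the value $3$ occurring exactly when $|g_1|^2=1/2$), producing rank $\in\{13,14,15,16\}$.

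The subtlest subcase is $d=4$, $S=\{0,1,3\}$: here $w_2=(0,c,0,\bar c)$ with $c=g_1\overline{g_3}$, and a direct computation gives $\widehat{w_2}=2(\mathrm{Re}\,c,-\mathrm{Im}\,c,-\mathrm{Re}\,c,\mathrm{Im}\,c)$, so $\|\widehat{w_2}\|_0\in\{2,4\}$. Combined with $\|\widehat{w_0}\|_0\in\{3,4\}$ and $\|\widehat{w_1}\|_0=\|\widehat{w_3}\|_0\in\{3,4\}$, enumerating all admissible triples $(\|\widehat{w_0}\|_0,\|\widehat{w_1}\|_0,\|\widehat{w_2}\|_0)$ produces rank $\in\{11,12,13,14,15,16\}$, and the union with the previous subcase exhausts the claim for $d=4$. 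I expect this final subcase to be the main obstacle: the involution-symmetric structure $w_2=(0,c,0,\bar c)$ is the sole mechanism producing a $2$-sparse vector with two vanishing Fourier entries, and without recognizing it one would miss the ranks $11$ and $12$ entirely. Elsewhere the bookkeeping is routine, since short DFTs of $1$- or $2$-sparse vectors have at most one zero, and the $3$-sparse $w_0$ is constrained by the positivity of its entries.
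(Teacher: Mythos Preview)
Your overall strategy matches the paper's, but there are two compensating errors in the $d=4$, $\|g\|_0=3$ case that you should fix.

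First, the orbit count is wrong: in $\Z_4$ the four $3$-subsets are all translates of one another (e.g.\ $\{0,1,3\}+1=\{0,1,2\}$), so there is a \emph{single} orbit, not two. The paper accordingly treats only $S=\{0,1,2\}$.

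Second, your computation of $w_2$ for $S=\{0,1,2\}$ in $d=4$ overlooks the wraparound: since $2+2\equiv 0\pmod 4$, both $g_0\overline{g_2}$ and $g_2\overline{g_0}$ are nonzero, so $w_2=(c,0,\bar c,0)$ with $c=g_0\overline{g_2}$ is $2$-sparse, not $1$-sparse. Its DFT is $(2\operatorname{Re}c,\,2i\operatorname{Im}c,\,2\operatorname{Re}c,\,2i\operatorname{Im}c)$, giving $\|\widehat{w_2}\|_0\in\{2,4\}$ exactly as in your ``subtlest subcase''. This is precisely the mechanism producing ranks $11$ and $12$; it lives already in the canonical case $S=\{0,1,2\}$, and the paper identifies it there (with the condition $g_0\overline{g_2}=\pm g_2\overline{g_0}$). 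Your analysis of $S=\{0,1,3\}$ is correct but is the same computation shifted by one, so once you correct the $w_2$ entry for $S=\{0,1,2\}$ the second subcase becomes redundant and the full range $\{11,\dots,16\}$ already appears.

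In short, your two errors cancel and the conclusion survives, but the argument as written is internally inconsistent: you cannot have $\|\widehat{w_2}\|_0$ equal to $4$ for one representative of the orbit and in $\{2,4\}$ for another representative of the \emph{same} orbit. The $d=5$ analysis and the $\|g\|_0\le 2$ reductions are fine and agree with the paper.
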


\begin{proof}
The result for $\|g\|_0=1,2$ can be obtained form Theorem~\ref{thm:rankd} and Proposition \ref{Prop:proertiesrankG}.
\begin{enumerate}
    \item[(i)]








Assume now  $\|g\|_0=3$. Without loss of generality, let $g_0,g_1,g_2\neq 0$ and $g_3=0$. We have $\|\hat{w}_0\|_0=$3 or 4, $\|\hat{w}_1\|_0=\|\hat{w}_3\|_0=$3 or 4, and $\|\hat{w}_2\|_0=$2 or 4.
\begin{itemize}
    \item $\|\hat{w}_0\|_0=3$ if $|g_0|^2+|g_2|^2\neq |g_1|^2$, else $\|\hat{w}_0\|_0=4$.
   
    \item $\|\hat{w}_2\|_0=2$ if $g_0\bar{g}_2=\pm g_2\bar{g}_0$, else $\|\hat{w}_2\|_0=4$.
    \item $\|\hat{w}_1\|_0=\|\hat{w}_3\|_0=3$ if $g_0\bar{g}_1+\omega^{k}g_1\bar{g}_2=0$ for some $k\in \Z_4$, else  $\|\hat{w}_1\|_0=\|\hat{w}_3\|_0=4$. 
\end{itemize}
Since all possible combinations of ($||\hat w_0||_0,||\hat w_1||_0,\|\hat w_2\|_0)$ can be obtained, we can conclude that $\rank(G(g))$ can be any integer between 11 and 16. 
\item[(ii)] Next, suppose $\|g\|_0=3$. We assume $g_0,g_1,g_2\neq 0$. All other possibilities can be obtained from additive and multiplicative translation from this vector. Then
$$ \|w_0\|_0=3, \, \,\|w_1\|_0=\|w_4\|_0=2, \, \, \text{and}\, \, \|w_2\|_0=\|w_3\|_0=1.$$
So $\|\hat{w}_0\|_0=3$ or 5; $\|\hat{w}_2\|_0=\|\hat{w}_3\|_0=5$; and $\|\hat{w}_1\|_0=\|\hat{w}_4\|_0=$4 or 5.
\begin{itemize}
\item $\|\hat{w}_0\|_0=3$ if $w_0=(1, -2\cos(4\pi/5),1,0,0)$. Then $g$ is equivalent to \\$(1,\sqrt{-2\cos(4\pi/5)})e^{i\theta_1},e^{i\theta_2},0,0)$ for $\theta_1,\theta_2\in [0,2\pi]$. Otherwise $\|\hat{w}_0\|_0=5$.
\item $\|\hat{w}_1\|_0=\|\hat{w}_4\|_0=4$ if and only if $g$ is equivalent to a scalar multiple of $(1,g_1,\frac{-g_1}{\overline{g_1}}\omega^j,0,0)$ for some $j\in \Z_5$.  Otherwise $\|\hat{w}_1\|_0=\|\hat{w}_4\|_0=5$.  
\end{itemize}
Since all combinations of the pair ($||\hat w_0||_0,||\hat w_1||_0)$ can be obtained, we have $\rank(G(g))=$21, 23 or 25.
\end{enumerate}

\end{proof}

\section*{Acknowledgements}  S.~Kang and K.~A.~Okoudjou were partially supported by  the U. S.\ Army Research Office  grant  W911NF1610008,  the National Science Foundation grant DMS 1814253,  and an MLK  visiting professorship at MIT.

\bibliographystyle{amsplain}
\bibliography{bibli}
\end{document}